\newtheorem{rema}{Remark}
\newtheorem{defi}{Definition}
\newtheorem{lemm}{Lemma}
\newtheorem{theo}{Theorem}
\newtheorem{coro}{Corollary}
\newcommand{\R}[1][]{\ensuremath{{\mathbb{R}^{#1}} }}
\renewcommand{\S}[1][]{\ensuremath{{\mathbb{S}^{#1}} }}
\newcommand{\M}{{\cal M}}
\newcommand{\s}{{\cal S}}
\newcommand{\<}{\langle}
\renewcommand{\>}{\rangle}
\newcommand{\ga}{\gamma}
\newcommand{\al}{\alpha}
\newcommand{\eps}{\epsilon}
\newcommand{\te}{\theta}
\newcommand{\ka}{\kappa}
\newcommand{\la}{\lambda}
\newcommand{\be}{\beta}
\newcommand{\si}{\sigma}
\newcommand{\ove}{\overline}
\newcommand{\co}{{\texttt{cos}\eps}}
\newcommand{\sinu}{{\texttt{sin}\eps}}
\newcommand{\E}{E_N}
\newcommand{\nablaN}{\nabla^{N}}
\newcommand{\ET}{E_T}
\newcommand{\nablaT}{\nabla^{T}}
\renewcommand{\L}{{{\cal R}^{N}}}
\date{}
\title{On the affine Gauss maps of submanifolds of Euclidean space}
\author{ Henri Anciaux\footnote{Universit\'e Libre de Bruxelles, henri.anciaux@gmail.com}, 
Pierre Bayard\footnote{Universidad Nacional Aut\'onoma de M\'exico, bayard@ciencias.unam.mx}}
\begin{document}

\maketitle

\centerline{\textbf {\large{Abstract}}}

\bigskip

{\small It is well known that the space of oriented lines of Euclidean space has a natural symplectic structure. Moreover, given an immersed, oriented hypersurface $\s$ the set of oriented lines that cross $\s$ orthogonally is a Lagrangian submanifold. Conversely, if $\ove{\s}$ an $n$-dimensional family of oriented lines is Lagrangian, there exists, locally, a $1$-parameter family of immersed, oriented, parallel hypersurfaces $\s_t$ whose tangent spaces cross orthogonally the lines of $\ove{\s}.$ The purpose of this paper is to generalize these facts to higher dimension:  to any point $x$ of a submanifold $\s$ of $\R^m$ of dimension $n$ and co-dimension $k=m-n,$ we may associate the affine $k$-space normal to $\s$ at $x.$ Conversely, given an $n$-dimensional family $\ove{\s}$ of affine $k$-spaces of $\R^m$,  we provide certain  conditions granting the local existence of a family of $n$-dimensional submanifolds $\s$ which cross orthogonally the affine $k$-spaces of $\ove{\s}$. We also define a curvature tensor for a general family of affine spaces of $\R^m$ which generalizes the curvature of a submanifold, and, in the case of a $2$-dimensional family of $2$-planes in $\R^4$, show that it satisfies a generalized Gauss-Bonnet formula.

\bigskip

\centerline{\small \em 2010 MSC: 53A07, 	53B25  
\em }


\section*{Introduction}
It is well known that the space $L(\R^{n+1})$ of oriented lines of Euclidean space $\R^{n+1}$ enjoys a natural symplectic structure. The simplest way to understand this is through the identification of $L(\R^{n+1})$  with the tangent bundle of the unit sphere $T \S^{n}.$ The canonical symplectic form of $T\S^{n}$ is $\omega = -d \alpha, $ where $\alpha$
is the tautological form (sometimes also called \em Liouville form\em) defined by the formula $\alpha_{(p,v)} =\< v, d\pi(.)\>$ at $(p,v)\in T\S^n$ ($p\in \S^n,$ $v\in T_p\S^n$), where $\pi : T\S^n \to \S^n$ is the canonical projection. 

Moreover, given an immersed, oriented hypersurface 
$\s \subset \R^{n+1},$ the set of oriented lines that cross $\s$ orthogonally is a Lagrangian submanifold of $L(\R^{n+1})$. This fact has a nice geometric interpretation: generically, a non-flat hypersurface may be locally parametrized by its Gauss map; in this case the Lagrangian submanifold is a section of $T\S^n$ and its generating function is the support function of the hypersurface.
Conversely, given a Lagrangian submanifold $\ove{\s} \subset L(\R^{n+1})$, there exists, locally, a $1$-parameter family of immersed, oriented, parallel hypersurfaces $\s_t$ whose tangent spaces cross orthogonally the lines of $\overline{\s}$.  The situation is very similar if we replace the Euclidean space by a pseudo-Riemannian space form $\mathbb Q^{n+1}_p$ of arbitrary signature $(p,n+1-p)$ and $L(\R^{n+1})$ by the space of geodesics $L(\mathbb Q^{n+1}_p)$ of $\mathbb Q^{n+1}_p$ (see \cite{An}).

The aim of this paper is to generalize these facts to the higher co-dimension submanifolds of Euclidean (or pseudo-Euclidean) space: to any point $x$ of a submanifold $\s$ of $\R^m$ of dimension $n$ and co-dimension $k=m-n,$ we may associate the affine $k$-space normal to $\s$ at $x$. We call this data the \em affine Gauss map \em of $\s.$ Conversely, given an $n$-dimensional family $\overline{\s}$ of affine $k$-spaces of $\R^m$ (a data that we call \em abstract affine Gauss map\em), it is natural to ask when it is the affine Gauss map of some submanifold; in other words: under which condition does there exist locally a family of $n$-dimensional submanifolds $\s$ which cross orthogonally the affine $k$-spaces of $\overline{\s}$? 

For this purpose, we first examine the Grassmannian $\mathcal{Q}$ of affine $k$-spaces. It has a natural bundle structure and we define a natural $1$-form $\al$ on it. This $1$-form is vector-valued rather than real-valued whenever $k>1$, and generalizes the classical tautological form of $T\S^n.$

Next, we consider a map $\overline{\varphi} :\M \to \mathcal{Q}$ defined on an $n$-dimensional manifold $\M$. Pulling back the geometry of $\mathcal{Q}$ via the map $\overline{\varphi}$ induces a natural bundle $\E$ of rank $k$ over $\M$, that we call \em abstract normal bundle, \em
 equipped with a natural connection $\nablaN$  (see the next section for the precise definition). Our main result is the determination of some natural geometric conditions on  $(\E,\nablaN)$ and  $\ove{\varphi}$ which are sufficient to ensure the existence of such "integral" submanifolds $\s.$ The known cases are easily recovered from our result: in the case case of co-dimension $k=1$,  $\mathcal{Q}$ identifies to $\S^{n}$ and the integrability condition is equivalent to the vanishing of the symplectic form $\omega$. If $k >1$ and $(\E,\nablaN)$ is flat, the situation is quite similar.

 We are also able to deal with the case of hypersurfaces in pseudo-Riemannian space-forms (\cite{An}), that we regard as submanifolds of pseudo-Euclidean space $\R^{n+2}$ of co-dimension two contained in a (pseudo)-sphere: in this case the map $\ove{\varphi}$ is valued in the zero section of $\mathcal{Q}$, which makes the integrability condition easier to interpret.

\bigskip

A related problem consists of trying to reconstruct an immersed surface from its linear Gauss map. The difference is that instead of considering affine spaces, one deals with linear spaces, so less information is involved. This issue has been addressed for surfaces ($n=2$) in the following papers: \cite{HO1},\cite{HO2},\cite{W1}-\cite{W3}. To our knowledge this problem is open for higher dimensions $n\geq 3.$

\bigskip

Besides the problem of the prescription of the affine normal spaces of a submanifold, we study some geometric properties of a general congruence of affine spaces. Specifically, we propose a definition of its curvature, and, in the case of a congruence of planes in $\R^4$, we obtain a Gauss-Bonnet type formula. This generalizes to higher dimension and co-dimension similar results obtained in \cite{GK} for a congruence of lines in $\R^3.$ Our treatment is simplified by the use of a formula expressing the curvature of the tautological bundles in terms of the Clifford product (Section \ref{section 8} and Appendix \ref{app curv tautological bundles}); this formula might also be of independent interest.

\bigskip

The paper is organized as follows: in Section \ref{section notations}, we set some notation and state our main results. In Section \ref{Liouville}, we introduce the generalized canonical tautological form, while Sections \ref{section proof th1} and \ref{section construction} are devoted to the proofs of the main theorems. Section \ref{section 7} deals with some special cases of low dimension and co-dimension. The last section is concerned with the general notion of curvature of a congruence. It establishes a Gauss-Bonnet type formula for a congruence of planes in  $\R^4$. Two short appendices end the paper.

\section{Notation and statement of results}\label{section notations}

We denote by $\mathcal{Q}_o=G_{m,n}\subset\Lambda^n\R^m$ the Grassmannian of the oriented linear $n$-planes of Euclidean space $\R^m$ and by  $\mathcal{Q}$ the Grassmannian of the affine oriented $k$-planes where $k$ is such that $n+k=m.$ We have the following identification between $\mathcal{Q}$ and the \emph{tangent tautological bundle} $\tau_T\rightarrow\mathcal{Q}_o:$
\begin{eqnarray}
\mathcal{Q}\simeq\tau_T&=&\{(p_o,v)\in \mathcal{Q}_o\times \R^m,\ v\in p_o\}\\
&=&\{(p_o,v)\in \mathcal{Q}_o\times \R^m,\ p_o\wedge v=0\},
\end{eqnarray}
since a affine oriented $k$-plane of $\R^m$ may be uniquely written in the form $v+p_o^{\perp},$ where $p_o$ is an oriented $n$-plane of $\R^m,$ and $v$ is a vector belonging to $p_o.$ Similarly, we introduce the \emph{normal tautological bundle} $\tau_N\rightarrow\mathcal{Q}_o$ by
$$\tau_N=\{(p_o,v)\in \mathcal{Q}_o\times \R^m,\ v\in p_o^{\perp}\}.$$
We denote by $\pi$ the canonical projection $\mathcal{Q}\rightarrow\mathcal{Q}_o$.

\begin{defi} \em
An  \em abstract affine Gauss map \em  is a map  $\ove{\varphi} :\M \to \mathcal{Q}$, where $\M$ is an $n$-dimensional manifold. Similarly, an \em abstract (linear) Gauss map \em  is a map $\ove{\varphi}_o :\M \to \mathcal{Q}_o.$ 
If $\ove{\varphi} : \M \to \mathcal{Q}$ is an abstract affine Gauss map, then 
$$\ove{\varphi}_o=\pi\circ \ove{\varphi}: \M\rightarrow\mathcal{Q}_o$$ 
is an abstract Gauss map that we shall call the abstract Gauss map \em associated \em to $\ove{\varphi}$.\em
\end{defi}
Given an abstract affine Gauss map $\ove{\varphi}$ and its associated Gauss map $\ove{\varphi}_o$, we consider the bundles $\ET:=\overline{\varphi}_o^*\tau_T$ and $\E:=\ove{\varphi}_o^*\tau_N$ based on $\M$, induced by $\ove{\varphi}_o$ from the tautological  bundles $\tau_T\rightarrow\mathcal{Q}_o$ and $\tau_N\rightarrow\mathcal{Q}_o;$
 these induced bundles are equipped with the connections induced from the natural connections on $\tau_T$ and $\tau_N$, denoted by $\nabla^{T}$ and $\nabla^{N}$ respectively. Since $\tau_T\oplus\tau_N=\mathcal{Q}_o\times\R^m,$ we have
\begin{equation}\label{sum trivial} \ET \oplus \E=\M\times\R^m.
\end{equation}
Moreover, if $\xi_1$ and $\xi_2$ are sections of $\ET\rightarrow \M$ and $\E\rightarrow \M$ respectively, and if $X$ is a vector field along $\M,$ we have 
\begin{equation}\label{covariant derivatives}
\nablaT_X\xi_1=(d\xi_1(X))^T\hspace{.5cm}\mbox{and}\hspace{.5cm}\nablaN_X\xi_2=(d\xi_2(X))^N
\end{equation}
where the superscripts $T$ and $N$ mean that we take the first and the second component of the vectors in the decomposition (\ref{sum trivial}).
\begin{rema}
If the abstract Gauss map $\ove{\varphi}_o$ is in fact the Gauss map of an immersion of $\M$ into $\R^m$ (assuming that an orientation on $\M$ is given), the bundles $\ET$ and $\E$ with their induced connections naturally identify to the tangent and the normal bundles of the immersion, with the Levi-Civita and the normal connections; see also Remark \ref{rmk identification} in Section \ref{section 8}.
\end{rema}

We also consider the $1$-form $\beta:=-\ove{\varphi}^*\alpha \in \Omega^1(\M,\E)$, where $\alpha$ is the canonical $1$-form on $\tau_T,$ with values in $\tau_N,$
$$\al_p (\xi):= d\pi_p (\xi) (v)$$
for all $\xi$ tangent to $\tau_T$ at $p=(p_o,v);$ in this formula $d\pi_p (\xi)\in T_{p_o}\mathcal{Q}_o$ is viewed as a linear map $p_o\rightarrow p_o^{\perp};$ see Section \ref{Liouville} for details.

\begin{theo}\label{first theorem}  Let $\M$ be an $n$-dimensional, smooth and oriented manifold and a map $\ove{\varphi}=(\ove{\varphi}_o,v) :\M \to \mathcal{Q}.$ If $\varphi : \M \to \R^m$ is an immersion whose affine Gauss map is $\overline{\varphi},$ then $s:=\varphi-v\in\Gamma(\E)$ satisfies
\begin{equation}\label{MainEq}
 \nablaN s= \be.
\end{equation}
Conversely, if $s \in \Gamma(\E)$ is a solution of (\ref{MainEq}), then  $\varphi:=s+v,$ if it is an  immersion and preserves orientation, has affine Gauss map $\overline{\varphi}$. The problem of the integration of $\ove{\varphi}$ thus reduces to that of finding a solution $s$ of (\ref{MainEq}) such that $s+v$ is an immersion preserving orientation.
\end{theo}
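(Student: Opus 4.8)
The plan is to reduce both assertions to a single computation of the pulled-back tautological form $\beta=-\ove{\varphi}^*\alpha$ in terms of the section $v$. I would start with a purely linear-algebraic unwinding of what it means for $\ove{\varphi}$ to be the affine Gauss map of $\varphi$. Since the affine $k$-plane encoded by $\ove{\varphi}(x)=(\ove{\varphi}_o(x),v(x))$ is $v(x)+\ove{\varphi}_o(x)^{\perp}$ with $v(x)\in\ove{\varphi}_o(x)$, the vector $v(x)$ is exactly the $\ET$-component of any point of that plane. Consequently the condition ``$\ove{\varphi}$ is the affine Gauss map of $\varphi$'' splits into: (i) the oriented tangent plane $d\varphi_x(T_x\M)$ coincides with $\ove{\varphi}_o(x)=\ET_x$, and (ii) $\varphi(x)$ lies on $v(x)+\ET_x^{\perp}$, i.e. $s(x)=\varphi(x)-v(x)\in\E_x$. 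By the splitting (\ref{sum trivial}), condition (i) is equivalent to $(d\varphi(X))^N=0$ for all $X$, while (ii) is just the statement that $s$ is a section of $\E$.

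The key step, and the one I expect to carry all the real content, is the identity
$$\beta=-(dv)^N,$$
where the normal component is taken in $\ET\oplus\E=\M\times\R^m$. To establish it I would first note that $v\in\Gamma(\ET)$, since $v(x)\in\ove{\varphi}_o(x)$. By definition $\beta_X=-\alpha_{\ove{\varphi}(x)}(d\ove{\varphi}_x(X))$, and $d\pi(d\ove{\varphi}(X))=d\ove{\varphi}_{o}(X)\in T_{\ove{\varphi}_o(x)}\mathcal{Q}_o$, which under the canonical identification $T_{p_o}\mathcal{Q}_o\simeq\mathrm{Hom}(p_o,p_o^{\perp})$ sends a vector $a\in p_o$ to the normal part of the derivative of any local section of $\ET$ extending $a$. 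Since $v$ is itself such a section, evaluating at $a=v(x)$ gives $\alpha_{\ove{\varphi}(x)}(d\ove{\varphi}(X))=(dv(X))^N$, and the identity follows. The one point demanding care is the well-definedness of this evaluation: a section of $\ET$ vanishing at $x$ has vanishing normal derivative at $x$, which I would check in a local orthonormal frame adapted to $\ET\oplus\E$; this both legitimizes the evaluation on the specific vector $v(x)$ and matches the abstract definition of $\alpha$ through $d\pi$ with the concrete expression $(dv)^N$.

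Granting the identity, both directions become immediate bookkeeping with (\ref{covariant derivatives}). For the direct statement I assume $\varphi$ is an immersion with affine Gauss map $\ove{\varphi}$; then $s=\varphi-v\in\Gamma(\E)$ by (ii), and
$$\nablaN_X s=(ds(X))^N=(d\varphi(X))^N-(dv(X))^N=-(dv(X))^N=\beta_X,$$
using $(d\varphi(X))^N=0$ from (i), which is exactly (\ref{MainEq}). For the converse I take $s\in\Gamma(\E)$ solving (\ref{MainEq}), set $\varphi=s+v$, and compute
$$(d\varphi(X))^N=(ds(X))^N+(dv(X))^N=\nablaN_X s+(dv(X))^N=\beta_X+(dv(X))^N=0,$$
so every $d\varphi_x(X)$ lies in $\ET_x=\ove{\varphi}_o(x)$.

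It remains to recover the affine Gauss map from this. If $\varphi$ is an immersion, injectivity of $d\varphi_x$ forces $d\varphi_x(T_x\M)=\ove{\varphi}_o(x)$ as $n$-planes, and the orientation-preserving hypothesis upgrades this to equality in $\mathcal{Q}_o$, giving (i); condition (ii) holds automatically since $s(x)\in\E_x=\ove{\varphi}_o(x)^{\perp}$, so the affine normal plane $\varphi(x)+\ove{\varphi}_o(x)^{\perp}=v(x)+\ove{\varphi}_o(x)^{\perp}$ equals $\ove{\varphi}(x)$. Thus the genuine difficulty is concentrated in the key identity and its reliance on the Grassmannian identification $T_{p_o}\mathcal{Q}_o\simeq\mathrm{Hom}(p_o,p_o^{\perp})$; the orientation hypothesis is used only at this final step, and everything else is routine manipulation of the decomposition (\ref{sum trivial}).
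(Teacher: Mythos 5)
Your proposal is correct and follows essentially the same route as the paper: the paper also writes $\varphi=v+s$, uses $(d\varphi)^N=0$ to reduce everything to $(ds)^N=-(dv)^N$, and obtains your key identity $\beta=-(dv)^N$ from $v=\pi'\circ\ove{\varphi}$ together with the alternative expression $\alpha'_p(\xi)=(d\pi'_p(\xi))^N$ of the tautological form. The only cosmetic difference is that you re-derive $\ove{\varphi}^*\alpha=(dv)^N$ directly from the definition $\al_p(\xi)=d\pi_p(\xi)(v)$ and the identification $T_{p_o}\mathcal{Q}_o\simeq L(p_o,p_o^{\perp})$ (including the well-definedness check), which is precisely the content of the paper's proof in Section \ref{Liouville general} that $\alpha=\alpha'$.
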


\begin{rema} We shall call $s$ the \em support function \em of $\varphi$. In the case of co-dimension one, i.e.\ $\E$ has rank one, $s$ identifies with the usual support function of the immersed hypersurface  $\s$.
\end{rema}
\begin{rema}\label{equidistant}
Two solutions of the same problem, i.e.\ which enjoy the same affine Gauss map $\overline{\varphi}$, are equidistant. Indeed, if $\varphi_1$ and $\varphi_2$ are two such solutions, then
$$d(||\varphi_1 -\varphi_2||^2) = 2 \< d\varphi_1 - d\varphi_2 , \varphi_1 - \varphi_2\>.$$
Since 
$ \varphi_i = v + s_i, \, i =1,2,$ we have $\varphi_1 - \varphi_2= s_1-s_2$, which is a normal vector. On the other hand, 
$d\varphi_i, \, i=1,2,$ is tangent. Therefore $\< d\varphi_1 - d\varphi_2 , \varphi_1 - \varphi_2\>$ vanishes.
\end{rema}

It seems difficult to describe the set of solutions of (\ref{MainEq}) in full generality. However, we are able to do so under two additional assumptions which are satisfied in several interesting cases, such as the case of dimension $n=2$ or co-dimension $k=2$. For this purpose we introduce the map
\begin{eqnarray}
\L:\hspace{.5cm} \E&\rightarrow& L\ (\Lambda^2T\M,\E)\label{def Rnabla}\\
\varphi&\mapsto&(\eta\in\Lambda^2T\M\mapsto R^N(\eta)(\varphi)\in \E)\nonumber
\end{eqnarray}
where $R^N=d^{\nablaN}\! \!  \circ \nablaN\in\Omega^2(\M,End(\E))$ is the curvature of the connection $\nablaN.$ 
We write 
$$\be:= \be' + \be'',$$
for all $\be \in \Omega^1(\M,\E)$, according to the direct sum
$$ \E =  Ker\ \L\ \oplus\ (Ker\ \L)^{\perp}$$
($\E$ has a natural metric, pull-back of the canonical metric on $\tau_N$). We shall assume that these two sub-bundles have constant rank and are stable with respect to $\nablaN$.
We moreover need to make the following symmetry assumption: $\overline{\varphi}_o$ is such that
the set of bundle isomorphisms $\Phi:T\M\rightarrow E_T$ satisfying
\begin{equation}\label{nec cond isom sym th}
d\overline{\varphi}_o(X)(\Phi(Y))=d\overline{\varphi}_o(Y)(\Phi(X))\hspace{1cm}\forall\ X,Y\in T\M,
\end{equation}
is not empty. Here $d\overline{\varphi}_o$ is regarded as a map $T\M\rightarrow L(E_T,E_N).$ We then prove the following:

\begin{theo}\label{second theorem}  Let  $\M$ be an $n$-dimensional smooth and oriented manifold and a map $\overline{\varphi} :\M \to \mathcal{Q}.$  We assume that the rank $r$ of $Ker\ \L$ is constant and that the two sub-bundles  $Ker\ \L  $ and $(Ker\ \L)^{\perp}$ of $\E$ are stable with respect to the connection $\nablaN.$ If  moreover (\ref{nec cond isom sym th}) holds and, $\forall X\in T_x \M, \, X\neq 0,$ the linear map
\begin{equation}\label{cond phi immersion Rneq0}
proj\circ d\overline{\varphi}_o(X)\hspace{.3cm}\in\ L(E_T, Ker\ \L)\hspace{.3cm}\mbox{does not vanish,}
\end{equation}
where $proj:E_N \rightarrow Ker\ \L$ stands for the orthogonal projection, then there locally exists an immersion $\varphi: \M \to \R^m$ with affine Gauss map $\overline{\varphi}$ if and only if 
\begin{equation}\label{MainSyst} \left\{ 
\begin{array}{ccc} \overline{\varphi}^*\omega & \in &\Omega^2(\M,(Ker\ \L)^{\perp}) \\  \be''&=&  \nablaN ( (\L)^{-1} (d^{\nablaN} \!\!\be'')),\end{array} \right.
\end{equation}
where $\omega:=d^{\nablaN}\!\!\alpha$ is the canonical 2-form on $\tau_T.$ Moreover, if these equations hold, there exists in fact an $r$-parameter family of local immersions with affine Gauss map $\overline{\varphi},$ which form an $(n+r)$-dimensional Riemannian foliation of $\R^m.$ 
\end{theo}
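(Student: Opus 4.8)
The plan is to reduce Theorem \ref{second theorem} to Theorem \ref{first theorem}, i.e.\ to solving equation (\ref{MainEq}), $\nablaN s = \be$, and then characterizing when such a solution $s$ exists and yields an immersion. The starting observation is that $\nablaN s = \be$ is a first-order linear PDE for the section $s \in \Gamma(\E)$, so its integrability is governed by an overdetermined-system / Frobenius analysis. Applying the exterior covariant derivative $d^{\nablaN}$ to (\ref{MainEq}) gives the necessary compatibility condition
\begin{equation}\label{compat}
R^N s = d^{\nablaN}\be,
\end{equation}
since $d^{\nablaN}\circ \nablaN = R^N$ acting on sections. Recalling the definition (\ref{def Rnabla}) of $\L$, the left-hand side is exactly $\L(s)$ read as a $\Lambda^2 T\M$-valued pairing, so (\ref{compat}) reads $\L(s) = d^{\nablaN}\be$. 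This is where the splitting $\E = Ker\,\L \oplus (Ker\,\L)^\perp$ enters: along $Ker\,\L$ the operator $\L$ annihilates $s$, so the equation imposes no constraint on the $Ker\,\L$-component $s'$ but forces a condition on $d^{\nablaN}\be$; along $(Ker\,\L)^\perp$ the map $\L$ is invertible, so $s''$ is \emph{determined algebraically} as $s'' = (\L)^{-1}(d^{\nablaN}\be)$.

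Next I would exploit the stability of the two sub-bundles under $\nablaN$ to decompose (\ref{MainEq}) itself into its $Ker\,\L$ and $(Ker\,\L)^\perp$ parts, $\nablaN s' = \be'$ and $\nablaN s'' = \be''$. For the determined component, substituting $s'' = (\L)^{-1}(d^{\nablaN}\be'')$ back into $\nablaN s'' = \be''$ yields precisely the second equation of (\ref{MainSyst}),
\begin{equation}\label{beta-recover}
\be'' = \nablaN\big((\L)^{-1}(d^{\nablaN}\be'')\big),
\end{equation}
as the consistency condition that the algebraically-forced $s''$ actually solves its own differential equation. For the free component $s'$, the equation $\nablaN s' = \be'$ on the flat-along-$Ker\,\L$ sub-bundle is integrable precisely when the relevant curvature obstruction vanishes, and tracking how $R^N$ and $\be'$ interact with $\ove{\varphi}^*\omega$ (with $\omega = d^{\nablaN}\alpha$) should produce the first condition, $\ove{\varphi}^*\omega \in \Omega^2(\M,(Ker\,\L)^\perp)$; here I expect the vector-valued tautological-form machinery of Section \ref{Liouville} relating $\be = -\ove{\varphi}^*\alpha$ and $\ove{\varphi}^*\omega$ to be the computational engine. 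Solving $\nablaN s' = \be'$ on a sub-bundle of rank $r$ where the curvature obstruction has been killed then introduces exactly $r$ integration constants, accounting for the claimed $r$-parameter family.

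Having produced solutions $s = s' + s''$, the remaining task is geometric rather than PDE-theoretic: show that $\varphi := s + v$ is an immersion preserving orientation, so that Theorem \ref{first theorem} applies and $\ove\varphi$ is genuinely its affine Gauss map. Differentiating $\varphi = s + v$ and using $\nablaN s = \be = -\ove\varphi^*\alpha$ together with the definition of $\alpha$, the tangential part of $d\varphi$ should be expressible through $d\ove{\varphi}_o$, and condition (\ref{cond phi immersion Rneq0})---that $proj\circ d\ove{\varphi}_o(X)$ into $Ker\,\L$ never vanishes for $X\neq 0$---is exactly what guarantees $d\varphi(X)\neq 0$, hence immersivity, while the symmetry hypothesis (\ref{nec cond isom sym th}) is what makes the tangential map a genuine bundle isomorphism $T\M\to E_T$ (a shape-operator-type symmetry), ensuring $\varphi$ is well-defined as an integral submanifold and that orientation is preserved. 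The $r$-parameter family, obtained by varying the free constants in $s'$, foliates a neighborhood in $\R^m$; that the leaves are equidistant and the foliation Riemannian follows from Remark \ref{equidistant} applied fiberwise. The main obstacle I anticipate is the careful bookkeeping in the free-component step: disentangling the curvature obstruction to $\nablaN s' = \be'$ and showing it is equivalent to the clean statement $\ove{\varphi}^*\omega \in \Omega^2(\M,(Ker\,\L)^\perp)$, since this requires precise control of the interaction between $R^N$, the tautological $2$-form $\omega$, and the orthogonal splitting, and it is here that the hypotheses (\ref{nec cond isom sym th}) and the stability of $Ker\,\L$ must be used in full.
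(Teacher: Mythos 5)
Your formal resolution of Equation (\ref{MainEq}) follows the paper's route essentially verbatim: apply $d^{\nablaN}$ to obtain $\L(s)=d^{\nablaN}\be$, use the splitting $\E=Ker\ \L\oplus(Ker\ \L)^{\perp}$ to determine $s''=(\L)^{-1}(d^{\nablaN}\be'')$ algebraically and leave $s'$ free up to $r$ constants of integration, and recover the two conditions of (\ref{MainSyst}) as compatibility conditions (the first being equivalent to $d\be_i=0$ for the components of $\be$ along a parallel orthonormal frame of $Ker\ \L$, via $d^{\nablaN}\be=-\ove{\varphi}^*\omega$ and the fact that $\L(s)$ takes values in $(Ker\ \L)^\perp$). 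That part is sound and matches Section \ref{section construction}.

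The genuine gap is in the immersion step. You assert that condition (\ref{cond phi immersion Rneq0}) ``is exactly what guarantees $d\varphi(X)\neq 0$'' for the solution $\varphi=v+s$ just constructed. That is false: a particular solution of (\ref{MainEq}) can fail to be an immersion even when (\ref{cond phi immersion Rneq0}) holds --- in the codimension-one model, the congruence of lines normal to concentric spheres satisfies all the hypotheses, yet the leaf through the center degenerates to a point. The hypotheses (\ref{nec cond isom sym th}) and (\ref{cond phi immersion Rneq0}) do not control $d\varphi=\nablaT v-B(s)$ for a fixed $s$; they control the \emph{family}. The paper's argument is: the solutions form the $r$-parameter family $\varphi+\sum_i c_is_i$ with $(s_i)$ parallel in $Ker\ \L$, so $d\bigl(\varphi+\sum_i c_is_i\bigr)=d\varphi-\sum_i c_iB(s_i)$, and it suffices to produce $\nu\in Ker\ \L$ with $B(\nu):T\M\rightarrow\ET$ an isomorphism, then take $c=t\nu$ for suitable $t$. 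Producing such a $\nu$ is the real content: using $B(\nu)(X)=d\ove{\varphi}_o(X)^*(\nu)$ and the curvature formula of Appendix \ref{app curv tautological bundles}, one shows (Lemma \ref{lem 1 sec 6}) that the operators $\Phi^{-1}B(\nu)$, $\nu\in Ker\ \L$, are symmetric and pairwise commuting for the metric making $\Phi$ an isometry, hence simultaneously diagonalizable; condition (\ref{cond phi immersion Rneq0}) then guarantees that for each common eigendirection some $\nu$ has a nonzero eigenvalue, so a generic $\nu$ avoids all $n$ bad hyperplanes and $B(\nu)$ is invertible (Lemma \ref{lem 2 sec 6}). None of this appears in your proposal, and without it the ``only if $\Rightarrow$ if'' direction does not close. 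Relatedly, Remark \ref{equidistant} alone does not yield the Riemannian foliation: one needs $(c,x)\mapsto\varphi(x)+\sum_i c_is_i(x)$ to be a local diffeomorphism (which again rests on the immersion statement) before checking that the projection onto the $c$-coordinates is a Riemannian submersion because the $s_i$ are orthonormal and normal to every leaf.
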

\begin{rema}
Condition (\ref{nec cond isom sym th}) is obviously necessary for the existence of an immersion $\varphi$ with affine Gauss map $\overline{\varphi},$ since, for $\Phi=d\varphi,$ it expresses the symmetry of the second fundamental form; see also \cite{W1}-\cite{W3} where this condition appears explicitly in the problem of finding immersions of surfaces with prescribed linear Gauss map.
\end{rema}
If $\L=0,$  Condition (\ref{cond phi immersion Rneq0}) is equivalent to saying that $\overline{\varphi}_o$ is an immersion, System (\ref{MainSyst}) reduces to $\overline{\varphi}^*\omega=0$ and we obtain the following result:

\begin{coro}\label{feuilletage cor} 
Let  $\M$ be an $n$-dimensional smooth and oriented manifold and  $\overline{\varphi} :\M \to \mathcal{Q}$ such that $\overline{\varphi}_o$ is an immersion and (\ref{nec cond isom sym th}) holds. If the bundle $(\E,\nablaN)$ is flat, then there locally exists a Riemannian foliation of $\R^m$ whose leaves are integral submanifolds of the abstract affine Gauss map $\overline{\varphi}:\M\rightarrow\mathcal{Q}$ if and only $\overline{\varphi}$ is Lagrangian, i.e. satisfies
\begin{equation}\label{condition lagrangien th}
\overline{\varphi}^*\omega=0.
\end{equation}
\end{coro}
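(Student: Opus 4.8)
The plan is to specialize Theorem \ref{second theorem} to the flat case, checking that all of its hypotheses reduce to the ones stated here and that System (\ref{MainSyst}) collapses to the single Lagrangian condition (\ref{condition lagrangien th}).

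First I would observe that flatness of $(\E,\nablaN)$ means exactly that its curvature $R^N$ vanishes identically, so that the bundle map $\L$ of (\ref{def Rnabla}) is identically zero. Consequently $Ker\ \L=\E$ and $(Ker\ \L)^{\perp}=\{0\}$; in particular the rank $r$ of $Ker\ \L$ equals $k$, the rank of $\E$, and is therefore constant, while the two sub-bundles $\E$ and $\{0\}$ are trivially stable under $\nablaN$. Thus the first block of hypotheses of Theorem \ref{second theorem} holds automatically.

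Next I would match the remaining hypotheses and simplify the system. Since $Ker\ \L=\E$, the orthogonal projection $proj:\E\rightarrow Ker\ \L$ is the identity, so condition (\ref{cond phi immersion Rneq0}) reads $d\overline{\varphi}_o(X)\neq 0$ for every nonzero $X$, that is, precisely that $\overline{\varphi}_o$ is an immersion, one of the standing hypotheses of the corollary; the symmetry condition (\ref{nec cond isom sym th}) is assumed in both statements. Finally, the splitting $\be=\be'+\be''$ along $\E=Ker\ \L\oplus(Ker\ \L)^{\perp}$ forces $\be''=0$, so the second equation of (\ref{MainSyst}) is vacuously satisfied, while its first equation, having target $(Ker\ \L)^{\perp}=\{0\}$, becomes $\overline{\varphi}^*\omega=0$, which is exactly (\ref{condition lagrangien th}). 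Theorem \ref{second theorem} then applies verbatim: a local integral immersion with affine Gauss map $\overline{\varphi}$ exists if and only if (\ref{condition lagrangien th}) holds, and when it does there is an $r$-parameter family of such immersions; substituting $r=k$ gives $n+r=n+k=m$, so this family sweeps out an open subset of $\R^m$ and yields a Riemannian foliation of $\R^m$ by the $n$-dimensional integral leaves, as claimed.

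I do not expect a serious obstacle here, since the corollary is a direct degeneration of Theorem \ref{second theorem}; the only point demanding care is the interpretation of the second line of (\ref{MainSyst}) when $(Ker\ \L)^{\perp}$ is the zero bundle. There $\L^{-1}$ denotes the inverse of the restriction of $\L$ to $(Ker\ \L)^{\perp}$, which is the zero map on a trivial bundle, so both sides of that equation vanish identically and it imposes no condition; this should be stated explicitly to justify that (\ref{MainSyst}) genuinely reduces to (\ref{condition lagrangien th}) alone.
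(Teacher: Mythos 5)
Your proposal is correct and follows essentially the same route as the paper, which obtains the corollary by exactly this specialization of Theorem \ref{second theorem}: flatness gives $\L=0$, hence $Ker\ \L=\E$ with $r=k$, condition (\ref{cond phi immersion Rneq0}) becomes the immersion hypothesis on $\overline{\varphi}_o$, the second line of (\ref{MainSyst}) is vacuous since $\be''=0$, and the first line reduces to $\overline{\varphi}^*\omega=0$, with $n+r=m$ yielding the local foliation of $\R^m$. Your explicit remark on interpreting $\L^{-1}$ on the zero bundle is a reasonable point of care that the paper passes over silently.
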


\begin{rema}
This result generalizes the case of a congruence of oriented lines of $\R^{n+1}$ (\cite{An}): in this case $\E$ has rank 1, $\nabla^N$ is flat, so the assumptions of Corollary \ref{feuilletage cor} are satisfied. Condition (\ref{condition lagrangien th}) then amounts to saying that $\overline{\varphi}:\M\rightarrow \mathcal{Q}\simeq T\S^n$ is a Lagrangian map; see Section \ref{Liouville}.
\end{rema}

\begin{rema}
In general, the condition expressing the flatness of  the bundle $(E_N,\nabla^N)$  may be written in the form 
\begin{equation}\label{condition lagrangien2}
\overline{\varphi}_o^*\omega_o=0,
\end{equation}
where $\omega_o\in\Omega^2(\mathcal{Q}_o,End(\tau_N))$ stands for the curvature of $\tau_N\rightarrow\mathcal{Q}_o$. This follows from the fact that the connection $\nabla^N$ is induced from the natural connection on $\tau_N,$ by its very definition (\ref{covariant derivatives}). Similarly to  (\ref{condition lagrangien th}), this condition may be also interpreted as a Lagrangian condition; this is the point of view adopted in \cite{An}. See also Section \ref{subsection space forms}.
\end{rema}

\begin{rema}
There is another interesting case where Theorem \ref{second theorem} applies: if the curvature $\L$ defined in (\ref{def Rnabla}) is injective then Equation (\ref{MainEq}) is solvable if and only if
$$\gamma:=-\overline{\varphi}^*\omega\hspace{.3cm}\in\hspace{.2cm} Im\ \L\hspace{.5cm}\mbox{and}\hspace{.5cm}\nabla((\L)^{-1}(\gamma))=\beta,$$
where $\beta=-\overline{\varphi}^*\alpha.$ The solution is then unique. We mention the following special case: if there exists $\eta\in\Lambda^2T\M$ such that $R^N(\eta):\E\rightarrow \E$ is an isomorphism, then  the map $\L$ is injective. In particular it is the case if $(n,k)=(2,2)$ and the normal curvature is not zero.
\end{rema}
\section{The generalized tautological form} \label{Liouville}
In this section we introduce a natural $\tau_N$-valued one-form on the Grassmannian $\mathcal{Q}$ of the affine $k$-spaces in $\R^m.$ We then show that it generalizes the classical tautological form on $T\S^n,$ and finally that it also satisfies a tautological property.
\subsection{The general construction}\label{Liouville general}
 We introduce the two natural projections
\begin{equation}\label{def pi}
\pi:\begin{array}[t]{ccc}\mathcal{Q}&\rightarrow&\mathcal{Q}_o\\p=(p_o,v)&\mapsto& p_o\end{array}
\end{equation}
and
\begin{equation}\label{def pip}
\pi':\begin{array}[t]{ccc}\mathcal{Q}&\rightarrow&\R^m\\p=(p_o,v)&\mapsto& v.\end{array}
\end{equation}
If $p \in \mathcal{Q}$ and $\xi \in T_p  \mathcal{Q}$ , we have  $d\pi_p(\xi)\in T_{p_o}\mathcal{Q}_o.$ Moreover $T_{p_o}\mathcal{Q}_o$ identifies  naturally with $L(p_o, p_o^\perp)$, the set of linear maps from   $p_o$ to $p_o^{\perp}.$ Hence we can evaluate
 $d\pi_p(\xi)$ at $v,$ therefore getting a vector in $p_o^{\perp}.$ The space $p_o^{\perp}$ is the fiber over $p_o$ of the normal  tautological bundle $\tau_N\rightarrow\mathcal{Q}_o,$ so we obtain a  $\tau_N$-valued $1$-form $\alpha$ on $\mathcal{Q}$ defined as follows:
$$ \al_p (\xi):= d\pi_p (\xi) (v).$$
We can make more explicit the identification $T_{p_o}\mathcal{Q}_o\simeq L(p_o,p_o^{\perp})$. We first set
\begin{eqnarray*}
T_{p_o}\mathcal{Q}_o\subset\Lambda^n\R^m&\simeq& L(p_o,p_o^{\perp})\\
\eta&\mapsto & (v\in p_o\mapsto -i_{p_o}(\eta\wedge v)),
\end{eqnarray*}
where the inner product  $i_{p_o}$ is defined as follows: completing the basis $(e_1,\ldots,e_n)$ of $p_o$ in an orthonormal basis $(e_1,\ldots,e_m)$ of $\R^m,$ we set, for all $1\leq i_1<i_2<\cdots<i_{n+1}\leq m$, 
$$ i_{e_1\wedge\cdots\wedge e_n}(e_{i_1}\wedge e_{i_2}\wedge\cdots\wedge e_{i_{n+1}})= \left\{ \begin{array}{cl} e_{i_{n+1}} &
\mbox{ if } (i_1,i_2,\ldots,i_n)=(1,2,\ldots,n) \\  0 & \mbox{ otherwise. } \end{array} \right.$$
Hence we get 
\begin{equation}\label{def alpha 1}
\alpha_p(\xi)=-i_{p_o}(d\pi_p(\xi)\wedge v).
\end{equation}
This generalized Liouville form $\alpha$ may be defined in a different, but equivalent way: consider the  second projection $\pi':$ we have $d{\pi}'_p(\xi)\in\R^m;$ one may define
\begin{equation}\label{def alphap}
\alpha'_p(\xi)=(d\pi'_p(\xi))^N,
\end{equation}
which is the orthogonal projection of $d\pi'_p(\xi)$ on $p_o^{\perp}.$ We obtain again a $\tau_N$-valued $1$-form $\alpha'$ on $\mathcal{Q}.$ We claim that this $1$-form is nothing but the generalized tautological form we have defined above:  to see this, observe that (\ref{def alphap}) may be written as
\begin{equation}\label{def alpha 2}
\alpha'_p(\xi)=i_{p_o}(p_o\wedge d\pi'_p(\xi)).
\end{equation}
Now, since $p_o\wedge v=0$ for all $(p_o,v)\in\mathcal{Q},$ we get  $\xi=(p'_o,v')\in T_p\mathcal{Q}$ satisfies
$$p'_o\wedge v+p_o\wedge v'=0,$$
so that
$$i_{p_o}(p'_o\wedge v)=-i_{p_o}(p_o\wedge v').$$
Since $p'_o=d\pi_p(\xi)$ and $v'=d\pi'_p(\xi),$ (\ref{def alpha 1}) and (\ref{def alpha 2}) show that $\alpha=\alpha'.$

\subsection{Relation to the classical tautological form}
Here we explain how the form $\al$ is actually a generalization of the canonical tautological form of $T\S^{n}.$ 
We first observe that in the case  $(n,k)=(m-1,1),$ the hypersphere $\S^{n} \subset \R^{n+1}$ is identified with  $\mathcal{Q}_o,$ the Grassmannian of oriented, linear hyperplanes of $\R^{n+1}$.  It follows that $T \S^{n}$ is identified with $\mathcal{Q},$ the Grassmannian of oriented, affine lines of $\R^{n+1}.$ Through these identifications, the natural projection 
 $\pi:T\S^{n}\rightarrow \S^{n}$ corresponds to the map  $\pi:\mathcal{Q}\rightarrow\mathcal{Q}_o$ defined in Section \ref{Liouville general}. 

Using the first definition of $\alpha$ (Definition \ref{def alpha 1}),  we obtain: $\forall p=(p_o,v)\in T\S^{n},$ and  $\forall \xi\in T_p(T\S^{n}),$
 $$\alpha_p(\xi)=-i_{p_o}(* \, d\pi_p(\xi)\wedge v),$$
 where $*d\pi_p(\xi)\in \Lambda^{n}\R^{n+1}$ is the  $n$-vector naturally associated to the vector $d\pi_p(\xi)$ (by the Hodge star operator $*:\R^{n+1}\rightarrow\Lambda^{n}\R^{n+1}$, which is nothing but  the map  identifying $\S^{n}$ with $\mathcal{Q}_o$). It is then straightforward to check that
$$\alpha_p(\xi)=-\<d\pi_p(\xi),v\>p_o,$$
where, in the right hand side term, $p_o$ is seen as a vector of $\R^{n+1}$ (this is a basis of the line normal to the hyperplane represented by $p_o$). Therefore $\alpha_p$ is identified with
$-\<d\pi_p(.),v\>,$ which is, up to the sign, the classical tautological form.

\subsection{The tautological property}

We observe that $\al$, as in the classical case, enjoys a "tautological" property: let $\si$ be a section of $\mathcal{Q}$, i.e. a smooth map $\mathcal{Q}_o \to \mathcal{Q}$ which takes the form $\si(p_o)=(p_o, v(p_o))$. Then we have
$\si^* \al = -v,$ in the following sense: if $\eta \in T_{p_o} \mathcal{Q}_o \simeq L(p_o,p_o^{\perp})$
we have (using the "quantum physics notation"):
\begin{eqnarray*} (\si^* \al)_{p_o} (\eta) &=&\al_{\si(p_o)} (d\si(\eta))\\
 &=&- \< (d\pi \circ d\si)(\eta) | v\> \\
 &=&- \< d(\pi \circ \si)(\eta) |  v\> \\
 & = &- \< \eta | v\>.
\end{eqnarray*}

\section{Proof of Theorem \ref{first theorem}}\label{section proof th1}
Given an oriented $m$-dimensional manifold $\M$  and a map
$\overline{\varphi}: \M\rightarrow\mathcal{Q},$
we want to determine under which condition  there exists an immersion $\varphi: \M\rightarrow\R^m$ whose affine Gauss map is $\overline{\varphi}$, 
i.e.\ such that
\begin{equation}\label{eqn phi}
\overline{\varphi}(x)=\varphi(x)+\big(d\varphi_x(T_x\M) \big)^\perp,\quad \forall x \in \M.
\end{equation}
In other words, $\overline{\varphi}(x)$ is the affine $k$-plane normal to $\s:=\varphi(\M)$ at the point $\varphi(x)$.
Assuming that such an immersion exists, we write
\begin{equation}\label{def b phiN}
\varphi(x)=v(x)+s(x), \quad \forall x \in \M,
\end{equation}
according to the direct sum
$$ \R^m =d\varphi_x(T_x\M) \oplus  \big(d\varphi_x(T_x\M) \big)^\perp:= T_x \s \oplus N_x \s.$$
In other words, at the point $\varphi(x)$,  $v(x)$  is   tangent to $\s$  and $s(x)$ is  normal.\\

 Next we observe that $v$ and $s$ may be viewed as sections of  $\ET\rightarrow \M$ and $\E\rightarrow \M$ respectively, and also that
\begin{equation}\label{b proj phib}
v=\pi'\circ\overline{\varphi},
\end{equation}
where $\pi':\mathcal{Q}\rightarrow\R^m$ is the projection defined in (\ref{def pip}): indeed, 
\begin{eqnarray*}
\overline{\varphi}(x)&=&\varphi(x)+N_x \s\\
&=& v(x)+s(x)+N_x \s\\
&=&v(x)+N_x \s,
\end{eqnarray*}
since $s(x)$ belongs to $N_x \s,$ and (\ref{b proj phib}) follows.
Now, differentiating
$$\varphi=v+s,$$
we get
$$d\varphi=dv+ds,$$
and using that $(d\varphi)^N=0$ ($d\varphi$ is tangent to $\s)$) we deduce
$$(ds)^N=-(dv)^N,$$
which is equivalent to 
\begin{equation}\label{eqn}
\nablaN s=-{\overline{\varphi}}^*\alpha = \beta
\end{equation}
in view of (\ref{covariant derivatives}) and (\ref{def alphap}) together with (\ref{b proj phib}). Equation (\ref{eqn}) expresses the equality of two $1$-forms in $ \Omega^1(\M,\E)$ ($1$-forms on $\M$, valued in the bundle $\E\rightarrow \M).$
\\

Conversely, given $s\in\Gamma(E_N)$ a solution of (\ref{eqn}), the differential $d\varphi$ of the function $\varphi:=v+s$  is a 1-form on $\M$ with values in $E_T;$ moreover, if $d\varphi:T\M\rightarrow E_T$ is one-to-one and preserves the orientations, the map $\varphi:\M\rightarrow\R^m$ is an immersion with affine Gauss map
$$\varphi+\left(d\varphi(T\M)\right)^{\perp}=v+{E_T}^{\perp}=\overline{\varphi}.$$
This completes the proof of Theorem \ref{first theorem}.


\section{Proof of Theorem \ref{second theorem}}\label{section construction}

\subsection{The formal resolution}
We first proceed to solve Equation (\ref{MainEq}), whose unknown data is $s$; the other objects of the equation are known. Once we have determined a solution $s$ of (\ref{MainEq}), we obtain a solution $\varphi:=v+s$ of the problem (\ref{eqn phi}), where $v$ is defined by (\ref{b proj phib}). The requirement that $\varphi$ be an orientation preserving immersion is an additional constraint on $s;$ this will be studied in Section \ref{section immersion}. Setting 
$\gamma:= d^{\nablaN} \! \! \beta\in\ \Omega^2(\M,\E)$, Equation (\ref{MainEq}) implies
\begin{equation}\label{eqn gal phi}
R^N(\eta)(s)=\gamma(\eta), \quad \forall  \eta\in \Lambda^2T\M,
\end{equation}
which, taking into account the definition of $\L$ (Definition (\ref{def Rnabla})), is equivalent to:
\begin{equation}\label{equation L}
\L(s)=\gamma.
\end{equation}
The rank of $\L$ is assumed to be constant, which implies that 
\begin{equation}\label{splitting E}
\E=Ker\ \L\ \oplus\ (Ker\ \L)^\perp
\end{equation}
is a splitting into two sub-bundles; it is moreover assumed that 
$$\nablaN(Ker\ \L)\subset Ker\ \L,$$
that is that the two sub-bundles $Ker\ \L$ and $(Ker\ \L)^\perp$ are stable with respect to the connection $\nablaN.$ We note that on $Ker\ \L$ the connection $\nablaN$ is flat; thus there exists (locally) a basis $(s_1,\ldots,s_r)$ of orthonormal and parallel sections of $Ker\ \L.$ If $\gamma$ takes value in $Im\ \L,$ the sub-bundle image of the map $\L,$ we may solve (\ref{equation L}): setting $(\L)^{-1}$ the inverse of $\L:(Ker\ \L)^\perp\rightarrow Im\ \L,$ a solution must take the form 
\begin{equation}\label{general solution}
s=\sum_{i=1}^r\lambda_i s_i+(\L)^{-1}(\gamma),
\end{equation}
where $\lambda_1,\ldots,\lambda_r$ are real functions; moreover, since
$$\nablaN s=\sum_{i=1}^rd\lambda_i\ s_i+\nablaN((\L)^{-1}(\gamma))\hspace{.3cm}\in\ Ker\ \L\ \oplus\ (Ker\ \L)^{\perp},$$
setting
\begin{equation}\label{decomposition beta}
\beta=\sum_{i=1}^r\beta_i s_i+\beta'' \hspace{.3cm}\in\ Ker\ \L\ \oplus\ (Ker\ \L)^{\perp},
\end{equation}
we necessarily have (Equation (\ref{MainEq}))
\begin{equation}\label{lambda_i beta_i}
d\lambda_i=\beta_i,\ i=1,\ldots,r
\end{equation}
and the conditions
\begin{equation}\label{compatibility conditions}
d\beta_i=0,\ i=1,\ldots,r\hspace{.5cm}\mbox{and}\hspace{.5cm}\nabla^N((\L)^{-1}(\gamma))=\beta''.
\end{equation}
Conversely, if the compatibility conditions (\ref{compatibility conditions}) hold, then (\ref{general solution}) is a solution of (\ref{MainEq}), where the $\lambda_i$s satisfy (\ref{lambda_i beta_i}). 

A solution, if it exists, is not unique, but it is so modulo adding a parallel section $\sum_{i=1}^r c_i s_i$ depending on $r$ constants $c_1,\ldots,c_r$. 

Finally, we note that the first conditions in (\ref{compatibility conditions}) may be written nicely as follows: if $s$ is a solution of (\ref{MainEq}), we have 
$$R^N(s)=d^{\nablaN}\!\!\beta=-\overline{\varphi}^*\omega$$
where $\omega=d^{\nablaN}\alpha$ is a  $\tau_N$-valued $2$-form on $\mathcal{Q}$. Moreover, since $R^N$ preserves the splitting (\ref{splitting E}) and vanishes on $Ker\ \L,$  $R^N(s)$ is a $2$-form with values in $(Ker\ \L)^{\perp},$ and thus 
\begin{equation}\label{cond lagrangien general}
\overline{\varphi}^*\omega \in \Omega^2(\M,(Ker\ \L)^{\perp}).
\end{equation}  
Using (\ref{decomposition beta}), we see that this condition is equivalent to $d\beta_i=0,\ i=1,\ldots,r.$ 

\subsection{Existence of a solution which is an immersion}\label{section immersion}
A solution $\varphi$ is an immersion if and only if $d\varphi\in\Omega^1(\M,\ET)$ has rank $n$ at every point of $\M.$ Our strategy for proving the existence of such an immersion is the following:
we assume here that the solution $\varphi$ constructed in the previous section is not an immersion at $x_o\in\M$, and we ask if there exists a solution  
$$\varphi+\sum_{i=1}^rc_i s_i,\hspace{1cm} c_1,\ldots,c_r\in\R$$
which is an immersion at $x_o,$ i.e. such that
\begin{equation}\label{dphi}
d\varphi+\sum_{i=1}^rc_id s_i
\end{equation}
has rank $n$ at $x_o$ and preserves the orientation (we recall that $s_1,\ldots,s_r$ are parallel, orthonormal local sections of $Ker\ \L$). To this end, we consider the map
\begin{eqnarray*}
B:\hspace{1cm} \E&\rightarrow& T^*\M\otimes\ET\\
\xi&\mapsto&B(\xi):\ X\mapsto -(d\xi(X))^T ,
\end{eqnarray*}
where in the right hand side $\xi$ is extended to a local section of $\E.$ This is a tensor, since $B(f\xi)=fB(\xi)$ for all smooth functions $f$ on $\M.$  
\begin{rema}
If $\ove{\varphi}$ is the affine Gauss map of an immersion $\varphi':\M\rightarrow\R^m,$ the tensor $B$ naturally identifies with the shape operator $N\M\rightarrow T^*\M\otimes T\M$ of the immersion $\varphi'.$ We therefore call $B$ the \emph{abstract shape operator} of the abstract affine Gauss map $\ove{\varphi}$. Analogously, we may define the \emph{abstract second fundamental form} of  $\ove{\varphi}$ by
\begin{eqnarray*}
h:\hspace{1cm} T\M\times \ET &\rightarrow& \E\\(X,Y)&\mapsto&(dY(X))^N 
\end{eqnarray*}
where in the right hand side $Y$ is extended to a local section of $ \ET.$ This defines a tensor such that
$$ \< h(X,Y), \xi\> = \< B(\xi) (X),Y\>,  \, \, \, \forall( X,Y,\xi) \in T\M \times \ET \times \E.$$
 (See Appendix \ref{app B Gauss map} for the relation between these tensors and the differential of the Gauss map.)
\end{rema}

Since $d s_i=(d s_i)^T$ ($s_i$ is a parallel section of $\E$), the left hand side of (\ref{dphi}) may be written as 
$$d\varphi-\sum_{i=1}^rc_iB(s_i),$$
and the existence of a solution which is an immersion at $x_o$ and preserves the orientation is granted if there exists $\nu\in Ker\, \L$ such that $B(\nu):T\M\rightarrow E_T$ is an isomorphism; indeed, we may then choose $t\in\R$ such that $d\varphi-tB(\nu)$ is an isomorphism which preserves the orientations, and the result follows from the previous discussion.

For sake of simplicity, we first prove the result in the case of a flat normal bundle, which is the context of  Corollary \ref{feuilletage cor}, and only give at the end of the section brief indications for the proof of the general case.

We thus assume that $\overline{\varphi}$ satisfies the assumptions of Corollary \ref{feuilletage cor}: $\overline{\varphi}_o$ is an immersion,
\begin{equation}\label{flat cond}
\overline{\varphi}_o^*\omega_o=0\hspace{.5cm}\mbox{and}\hspace{.5cm}\overline{\varphi}^*\omega=0
\end{equation}
(the first condition is the vanishing of the curvature of $(E_N,\nabla^N),$ i.e. $r=k,$ while the second one is the Lagrangian condition) and there exists a bundle isomorphism $\Phi:T\M\rightarrow E_T$ such that
\begin{equation}\label{nec cond isom sym}
d\overline{\varphi}_o(X)(\Phi(Y))=d\overline{\varphi}_o(Y)(\Phi(X)), \,  \, \, \forall X,Y\in T\M
\end{equation}
(Condition (\ref{nec cond isom sym th})). We first prove the following
\begin{lemm}\label{lem 1 sec 6}
For all $X,Y\in T\M,$ $\nu,\nu'\in E_N,$ 
\begin{equation}\label{sym B(nu)}
\langle B(\nu)(X),B(\nu')(Y)\rangle=\langle B(\nu')(X),B(\nu)(Y)\rangle.
\end{equation}
\end{lemm}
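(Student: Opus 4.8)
The plan is to derive (\ref{sym B(nu)}) from an abstract Ricci equation, the flatness of the normal connection being the decisive ingredient. The natural framework is the splitting (\ref{sum trivial}), $\ET\oplus\E=\M\times\R^m$, endowed with the flat ambient differential $d$; reading off the two components of (\ref{covariant derivatives}) together with the definitions of $h$ and $B$, this connection decomposes, for $Y\in\Gamma(\ET)$ and $\xi\in\Gamma(\E)$, into the Gauss and Weingarten formulas
\begin{equation*}
dY(X)=\nablaT_X Y+h(X,Y),\qquad d\xi(X)=-B(\xi)(X)+\nablaN_X\xi.
\end{equation*}

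First I would exploit that $d$ is flat, i.e.\ its curvature vanishes, and extract the $\E$-valued part of this vanishing identity applied to a section $\xi\in\Gamma(\E)$. Concretely, one differentiates $d\xi(Y)=-B(\xi)(Y)+\nablaN_Y\xi$ once more in the direction $X$, expands each term via the two formulas above, and antisymmetrises in $X,Y$; in the normal component the tangential contributions $\nablaT(B(\xi)(Y))$ and the derivatives of $B(\xi)$ cancel or drop out, leaving only the algebraic terms and yielding the abstract Ricci equation
\begin{equation*}
R^N(X,Y)\xi=h\big(X,B(\xi)(Y)\big)-h\big(Y,B(\xi)(X)\big),\qquad\forall\,X,Y\in T\M,\ \xi\in\Gamma(\E).
\end{equation*}
Since $h$ and $B$ are tensorial, this is a pointwise identity and the chosen extensions of $Y$ and $\xi$ are irrelevant.

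Next I would pair this with a second normal vector $\xi'\in\E$ and apply the duality $\langle h(X,Z),\xi'\rangle=\langle B(\xi')(X),Z\rangle$ of the preceding remark (with $Z=B(\xi)(Y)$ and $Z=B(\xi)(X)$ in turn) to the right-hand side, obtaining
\begin{equation*}
\langle R^N(X,Y)\xi,\xi'\rangle=\langle B(\xi')(X),B(\xi)(Y)\rangle-\langle B(\xi')(Y),B(\xi)(X)\rangle.
\end{equation*}
Under the hypotheses of Corollary \ref{feuilletage cor} the normal bundle is flat, so $R^N=0$ (this is the first condition in (\ref{flat cond})); hence the left-hand side vanishes, and relabelling $\xi'=\nu$, $\xi=\nu'$ and using the symmetry of the inner product gives exactly (\ref{sym B(nu)}).

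The only real obstacle is the careful bookkeeping in the curvature computation: one must keep track of the off-diagonal shape terms and verify that, in the $\E$-component, everything except $h(X,B(\xi)(Y))$ and its $X\leftrightarrow Y$ swap disappears. Beyond this the argument is purely formal. I would also note that this lemma uses only the flatness $R^N=0$ and not the symmetry assumption (\ref{nec cond isom sym}), which is needed only for the later construction of the immersion itself.
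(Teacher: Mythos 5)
Your proof is correct, and it reaches the paper's key identity by a different road. The paper proves the lemma by quoting two of its appendix results: the identification $B(\nu)(X)=d\overline{\varphi}_o(X)^*(\nu)$ (Appendix \ref{app B Gauss map}), which turns (\ref{sym B(nu)}) into the operator identity $d\overline{\varphi}_o(X)\circ d\overline{\varphi}_o(Y)^*=d\overline{\varphi}_o(Y)\circ d\overline{\varphi}_o(X)^*$ on $\overline{\varphi}_o^{\perp}$, and the Clifford-algebra computation of Corollary \ref{coro app normal curvature}, which says that the antisymmetrization of this expression is exactly $\overline{\varphi}_o^*\omega_o(X,Y)$, hence zero under the flatness hypothesis. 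You instead rederive the same identity intrinsically on $\M$: decomposing the flat trivial connection on $\ET\oplus\E=\M\times\R^m$ into Gauss and Weingarten formulas and extracting the $\E$-component of its vanishing curvature gives the abstract Ricci equation $R^N(X,Y)\xi=h(X,B(\xi)(Y))-h(Y,B(\xi)(X))$, and pairing with $\xi'$ via the duality $\langle h(X,Z),\xi'\rangle=\langle B(\xi')(X),Z\rangle$ yields precisely the paper's formula $\langle R^N(X,Y)\xi,\xi'\rangle=\langle B(\xi')(X),B(\xi)(Y)\rangle-\langle B(\xi')(Y),B(\xi)(X)\rangle$, so $R^N=0$ finishes the argument; your curvature bookkeeping (the tangential terms $\nablaT_X(B(\xi)(Y))$ and $B(\nablaN_Y\xi)(X)$ dropping out of the normal component) is accurate. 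The trade-off is that your route is self-contained and uses only the standard submanifold-theoretic formalism, at the cost of redoing a computation, whereas the paper's route reuses machinery (the Clifford-product expression of the tautological curvature) that it needs anyway for the Gauss--Bonnet section. Your closing observation is also consistent with the paper: the lemma uses only $\overline{\varphi}_o^*\omega_o=0$ (equivalently $R^N=0$), not the symmetry assumption (\ref{nec cond isom sym}), which enters only in Lemma \ref{lem 2 sec 6}.
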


\begin{proof}
This is a consequence of the assumption $\overline{\varphi}_o^*\omega_o=0:$ indeed, since we have
$$B(\nu)(X)=d\overline{\varphi}_o(X)^*(\nu),  \, \, \, \forall X \in T\M, \nu\in E_N$$
 (Appendix \ref{app B Gauss map}), the symmetry condition (\ref{sym B(nu)}) is equivalent to the formula
$$d\overline{\varphi}_o(X)\circ d\overline{\varphi}_o(Y)^*=
d\overline{\varphi}_o(Y)\circ d\overline{\varphi}_o(X)^*, \,  \, \, \forall X,Y\in T\M,$$ 
where the left and the right hand side terms of this identity are regarded as operators on $\overline{\varphi}_o^{\perp};$ on the other hand the following formula holds (Corollary \ref{coro app normal curvature} in Appendix~\ref{app curv tautological bundles}):
$$\overline{\varphi}_o^*\omega_o(X,Y)=d\overline{\varphi}_o(X)\circ d\overline{\varphi}_o(Y)^*-d\overline{\varphi}_o(Y)\circ d\overline{\varphi}_o(X)^*, \, \, \, \forall X,Y\in T\M,$$ where $\overline{\varphi}_o^*\omega_o(X,Y)\in\Lambda^2\overline{\varphi}_o^{\perp}$ is also regarded as a (skew-symmetric) operator on $\overline{\varphi}_o^{\perp}.$
\end{proof}
Theorem \ref{second theorem} follows now from the next lemma:
\begin{lemm}\label{lem 2 sec 6}
There exists $\nu\in E_N$ such that $B(\nu):T\M\rightarrow E_T$ is an isomorphism.
\end{lemm}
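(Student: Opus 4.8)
The plan is to reduce the statement to a fiberwise linear-algebra problem at a single point $x\in\M$, and to exploit the two standing hypotheses — the symmetry condition (\ref{nec cond isom sym}) and Lemma \ref{lem 1 sec 6} — to force a whole linear family of operators to be simultaneously diagonalizable. I would equip $T_x\M$ with the inner product $\<X,Y\>_{\M}:=\<\Phi X,\Phi Y\>$ transported from $\ET$ by the isomorphism $\Phi$, and for $\nu$ in the fiber $(E_N)_x$ set $S_\nu:=\Phi^{-1}\circ B(\nu):T_x\M\to T_x\M$. Since $S_\nu$ depends linearly on $\nu$ and $B(\nu)$ is an isomorphism exactly when $S_\nu$ is invertible, it suffices to find one $\nu$ for which $S_\nu$ is nonsingular.

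First I would pin down the algebraic structure of the family $\{S_\nu\}$. Using $B(\nu)(X)=d\ove{\varphi}_o(X)^*(\nu)$ together with (\ref{nec cond isom sym}), one computes
$$\<B(\nu)X,\Phi Y\>=\<\nu,d\ove{\varphi}_o(X)\Phi Y\>=\<\nu,d\ove{\varphi}_o(Y)\Phi X\>=\<B(\nu)Y,\Phi X\>,$$
so that $\<S_\nu X,Y\>_{\M}$ is symmetric in $X,Y$ and each $S_\nu$ is self-adjoint for $\<\,\cdot\,,\,\cdot\,\>_{\M}$. Substituting $B(\nu)=\Phi S_\nu$ into Lemma \ref{lem 1 sec 6} rewrites it as $\<S_\nu X,S_{\nu'}Y\>_{\M}=\<S_{\nu'}X,S_\nu Y\>_{\M}$, which by self-adjointness is exactly the commutation relation $S_\nu S_{\nu'}=S_{\nu'}S_\nu$. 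Hence $\{S_\nu\}$ is a commuting family of self-adjoint endomorphisms of $T_x\M$.

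By the spectral theorem they admit a common orthonormal eigenbasis $e_1,\dots,e_n$ of $(T_x\M,\<\,\cdot\,,\,\cdot\,\>_{\M})$, with $S_\nu e_j=\mu_j(\nu)\,e_j$ and $\mu_j(\nu)=\<S_\nu e_j,e_j\>_{\M}$ a linear functional of $\nu$. Then $S_\nu$ is invertible if and only if $\mu_j(\nu)\neq0$ for every $j$. If some $\mu_j$ vanished identically we would have $B(\nu)e_j=\Phi S_\nu e_j=0$ for all $\nu$, i.e.\ $d\ove{\varphi}_o(e_j)^*=0$ and so $d\ove{\varphi}_o(e_j)=0$, contradicting that $\ove{\varphi}_o$ is an immersion and $e_j\neq0$. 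Thus each $\{\mu_j=0\}$ is a proper hyperplane of $(E_N)_x$; since finitely many hyperplanes cannot cover a real vector space, some $\nu$ avoids all of them and yields an isomorphism $B(\nu)$. Extending this $\nu$ to a parallel section of $\E$ (available because $(\E,\nablaN)$ is flat) and using the tensoriality of $B$ keeps $B(\nu)$ an isomorphism at $x$, hence on a neighborhood.

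I expect the main obstacle to be the second step: recognizing that (\ref{nec cond isom sym}) and Lemma \ref{lem 1 sec 6} jointly make $\{S_\nu\}$ a commuting self-adjoint family, and thus simultaneously diagonalizable. Once that is in hand the eigenvalues become genuine linear functionals of $\nu$, and the rest is the soft observation that finitely many hyperplanes do not fill the fiber, with the immersion hypothesis supplying exactly what is needed to exclude a uniformly degenerate eigendirection.
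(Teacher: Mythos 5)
Your proof is correct and follows essentially the same route as the paper's: transport the metric by $\Phi$, use (\ref{nec cond isom sym}) for self-adjointness and Lemma \ref{lem 1 sec 6} for commutativity, simultaneously diagonalize, rule out an identically vanishing eigenvalue via the immersion hypothesis, and conclude because finitely many proper hyperplanes cannot cover the fiber of $\E$. The only cosmetic difference is that the paper phrases the last step as the image of the linear map $(a_1,\dots,a_k)\mapsto(\sum_i a_i\lambda_j^{(i)})_j$ not being contained in $\cup_j\{x_j=0\}$, which is the same covering argument.
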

\begin{proof}
We first observe that (\ref{nec cond isom sym}) reads 
\begin{equation}\label{nec cond isom sym 2}
\langle \Phi(X),B(\nu)(Y)\rangle=\langle \Phi(Y),B(\nu)(X)\rangle,\,\,\,\forall X,Y\in T\M,\nu\in E_N.
\end{equation}
We consider the inner product in $T\M$ such that $\Phi:T\M\rightarrow E_T$ is an isometry, and observe that (\ref{nec cond isom sym 2}) means that 
$$\Phi^{-1}B(\nu):\ T\M\rightarrow T\M$$ 
is symmetric. Moreover, $\Phi^{-1}B(\nu)$ and $\Phi^{-1} B(\nu')$ commute $\forall \nu,\nu'\in E_N,$ : indeed, using (\ref{nec cond isom sym 2}), we have, $\forall X,Y\in T\M,$
\begin{eqnarray*}
\langle\Phi^{-1} B(\nu)\ \circ\ \Phi^{-1} B(\nu')(X),Y\rangle&=&\langle B(\nu)\left(\Phi^{-1}B(\nu')(X)\right),\Phi(Y)\rangle\\
&=&\langle B(\nu)(Y),B(\nu')(X)\rangle.
\end{eqnarray*}
Now, by (\ref{sym B(nu)}) this last expression is symmetric on $\nu$ and $\nu',$ so $\Phi^{-1}B(\nu)$ and $\Phi^{-1} B(\nu')$  commute. It follows that there exists a basis $(e_1,\ldots,e_n)$ of $T\M$ in which all the symmetric operators $\Phi^{-1} B(\nu),$ $\nu\in E_N,$ are represented by diagonal matrices. Let $(\nu_1,\ldots,\nu_k)$ be a basis of $E_N,$ and set
$$\Phi^{-1} B(\nu_i):=\left(\begin{array}{ccc}
\lambda_1^{(i)}&&0\\
&\ddots&\\
0&&\lambda_n^{(i)}
\end{array}\right),\hspace{1cm}i=1,\ldots,k.$$
If $\nu=\sum_{i=1}^ka_i\nu_i,$ we have
$$\Phi^{-1} B(\nu)=\left(\begin{array}{ccc}
\sum_{i}a_i\lambda_1^{(i)}&&0\\
&\ddots&\\
0&&\sum_ia_i\lambda_n^{(i)}
\end{array}\right).$$
Our goal is then to show that there exist $a_1, \ldots, a_k$ such that $\sum_{i=1}^k a_i\lambda_j^{(i)}\neq 0, \, \forall j, \, 1 \leq j \leq n.$ We first note that
\begin{equation}\label{cond lambdas}
\forall j\in\{1,\ldots,n\},\ \exists i\in\{1,\ldots,k\}\  \mbox{such that}\ \lambda_j^{(i)}\neq 0,
\end{equation}
since, if $\lambda_j^{(i)}=0$ for $i=1,\ldots,k$ then $B(\nu)(e_j)=0$ for all $\nu\in E_N,$ and $d\overline{\varphi}_o(e_j)=0,$ a contradiction since $\overline{\varphi}_o$ is assumed to be an immersion. We consider the linear map
\begin{eqnarray}
\R^k&\rightarrow&\R^n\\
(a_1,\ldots,a_k)&\mapsto& \left(\sum_{i}a_i\lambda_1^{(i)},\ldots,\sum_{i}a_i\lambda_n^{(i)}\right),
\end{eqnarray}
and assume by contradiction that its image is contained in the union of hyperplanes $\cup_{j=1}^n\{x_j=0\};$ it follows that it is contained in  some hyperplane $\{x_{j_o}=0\}.$ This is impossible since, by (\ref{cond lambdas}), $\sum_{i=1}^ka_i\lambda_{j_o}^{(i)}\neq 0$ for some $a_1,\ldots,a_k.$ 
\end{proof}
The arguments above still hold if we replace $E_N$ by $Ker\ \L\subset E_N,$ noting that $\overline{\varphi}_o^*\omega_o\in\Omega^2(\M,End(E_N))$ vanishes on $Ker\ \L$ (thus extending Lemma  \ref{lem 1 sec 6}),  and  that the assumption $B(\nu)(X)=0, \forall \nu \in Ker\ \L$ is not possible  if $Im(d\overline{\varphi}_o(X))$ is not contained in $(Ker\ \L)^\perp$ (for the proof of Lemma \ref{lem 2 sec 6}).

\subsection{The local Riemannian foliation}
Here again, we first assume that $r=k$ (the connection $\nablaN$ on $\E$ is flat), and that $\varphi:\M\rightarrow \R^m$ is a solution which is an immersion at some point $x_0\in \M.$ We show that the family of solutions then locally defines a Riemannian foliation of $\R^m$ at $\varphi(x_o)$. Let us consider $(s_1,\ldots,s_k)$ an orthonormal frame of $\E$ such that the sections $s_1, \ldots, s_k$ 
of $E\rightarrow \M$ are parallel. By the local inverse theorem, the map
\begin{eqnarray*}
\Phi:\ \R^k\times \M&\rightarrow& \R^m\\
((c_1,\ldots,c_k),x)&\mapsto&\varphi(x)+\sum_{i=1}^k c_i s_i(x);
\end{eqnarray*}
is a diffeomorphism from a neighborhood $U\times V$ of $((0,\ldots,0),x_o)$ in $\R^k\times \M$ onto a neighborhood $W$ of $\varphi(x_o)$ in $\R^m.$ If $p_1:\R^k\times \M\rightarrow\R^k$ is the first projection, the map
\begin{eqnarray*}
f:=p_1\circ\Phi^{-1}:W&\rightarrow& U
\end{eqnarray*}
is a Riemannian submersion; indeed, the vectors $\frac{\partial\Phi}{\partial c_j}=s_j,$ $1 \leq j \leq k,$ constitute orthonormal bases of the linear spaces normal to the fibres of $f$: fixing $(c_1,\ldots,c_k)\in U$ and setting $\varphi':=\varphi+\sum_{i=1}^k c_i s_i,$ we have
$$\< \frac{\partial\Phi}{\partial c_j}, d\varphi'(h)\>=
          \< s_j, d\varphi(h)+\sum_{i=1}^k c_i d s_i(h)\>=0,\, \, \forall h\in T\M,$$ since $s_j$ is normal to the immersion $\varphi,$  while $d\varphi(h)$ and $d s_i(h)$ are tangent to $\varphi$ (since $s_i$ is normal and parallel along $\varphi$). This proves the result.
\\

Now, in the general case $r <k$, the argument above still applies and we get the following result: if there exists a solution $\varphi:\M\rightarrow \R^m$   which is an immersion at some point $x_o\in \M,$ then the set of solutions is a local Riemannian foliation of a submanifold of $\R^m$ of dimension $n+r$ (the submanifold of $\R^m$ is the image of the local immersion $\Phi((c_1,\ldots,c_r),x)=\varphi(x)+\sum_{i=1}^r c_is_i(x)$).


\section{Some special cases}\label{section 7}

\subsection{Curves in Euclidean space}\label{subsection curves}
In this section, we give more detail on the case of curves $n=1$. 
According to Theorem \ref{second theorem}, there is no integrability condition in this case. Hence in this case any abstract affine Gauss map $\overline{\varphi}$ should actually be the Gauss map of a family of  curves $\varphi$.

Observe also that the  Grassmannian of affine hyperplanes $\mathcal{Q} $ identifies with $\S^{m-1} \times \R$: at the pair $(\al, \lambda)$ we associate the affine hyperplane $\lambda \al + \al^{\perp}.$ Hence,  given a curve $\overline{\ga}= (\al, \lambda): I \to \S^{m-1} \times \R$, we claim that there exists a $(m-1)$-parameter family of curves $\ga: I \to \R^m$ such
that $\forall t \in I, \ga(t) \in \overline{\ga}$ and $\ga' $ is orthogonal to $\overline{\ga}$, i.e.\ $\ga'$ is collinear to $\al.$

In order to simplify the exposition, we restrict ourselves to the case of curves in $3$-space, i.e. $m=3$ and $k=2$.
We shall use a few technical facts: without loss of generality, we may assume that $\al$ is parametrized by arclength, that we denote by $s$. We orient $\R^3$ and set $\nu:=\al \times \al'$, where $\times$ is the canonical vector product of $\R^3$.
It follows that $(\al, \al', \nu)$ is an orthonormal frame of $\R^3$ along $\al$. In particular, there exist real functions $\la$, $A$ and $B$ on $I$ such that $\ga =\la \al +  A \al' + B \nu= \la \al + s$,
where $s \in \al^{\perp} = Span ( \al', \nu).$ 
Using the
 "Fr\'enet equations" of $\al$ i.e.\  $\al'' = \ka \nu - \al, $ where $\ka$ is the curvature of $\al$ in $\S^2$, and $\nu' = -\ka \al'$ (see \cite{Ku}), we calculate
\begin{eqnarray*}
\ga'  &=&\la' \al + \la \al' +  A' \al' + B' \nu + A \al'' + B \nu' \\
 &=& (\la' - A) \al +   ( \lambda+ A' -  B \ka) \al' + (B' + A \ka)\nu . 
\end{eqnarray*}
Hence the assumption that $\ga'$  is collinear to $\al$ amounts to
$$ \left\{  \begin{array}{ccl} A' &=& B \ka - \la \\  B' & =& -A \ka.\end{array} \right. \leqno{(*)}$$
It is easy to integrate this system: we set $Z:=B+iA$, so that its writes $Z'=i\ka Z - i\la$. The general solution of $Z'=i\ka Z$ is 
$$ Z(t) =C e^{i \te(t)} $$
where $\te(t)= \int_{t} \ka(\tau)d\tau$ and $C$ is a complex  constant. Next we use
the method of variation of constants: for $Z(t) =C(t)  e^{i \te(t)}$ to be a solution of $Z'=i\ka Z - i\la$ we need to have
$ C'(t) e^{i \te(t)}=-i \la$, so that 
$$C(t) = - i \int_t \la(\tau) e^{-i\te(\tau)}d\tau=-\left( \int_t \la(\tau) \sin \te(\tau)d\tau,   \int_t \la(\tau) \cos \te(\tau)d\tau\right) $$
and
$$ (B+iA)(t)=-\left( \int_t \la(\tau) \sin \te(\tau)d\tau,  \int_t \la(\tau) \cos \te(\tau)d\tau\right) e^{i\te(t)}.$$
Thus, $A$ and $B$ are  determined by $\la$ and $\ka$. Since $\ka$ depends on $\al$,  $s$  can be reconstructed from $\overline{\ga}=(\al,\la).$
Moreover, the  $2$-parameter family of curves $\ga$ which are solutions of the previous problem forms, locally, a Riemannian foliation of $\R^3$: to see this, let $\ga_1$ and ${\ga}_2$ two such solutions. Then
$$\ga_1(t) - \ga_2(t) = (A_1(t) - A_2(t)) \al'+ (B_1(t) - B_2(t)) \nu \neq 0,$$
since $(A_1,B_1)$ and $(A_2,B_2)$ are two integral curves of the planar system $(*)$.

Observe moreover that
\begin{eqnarray*}
\frac{d}{dt} || \ga_1 -\ga_2||^2 &=& \frac{d}{dt} \Big(  (A_1 - A_2)^2+ (B_1 - B_2)^2 \Big)\\
&=&  2(A_1 - A_2)(B_1-B_2)\ka+ 2(B_1 - B_2)(A_1 - A_2)(-\ka)\\
&=&0,
\end{eqnarray*}
which means that any two curves of the family of solutions are equidistant: we recover Remark \ref{equidistant}.

 Since $\ga'=(\la'-A) \al$, the curve $\ga$ fails to be regular precisely if $A=\la'$. Clearly, it may happen only for a particular choice of the initial condition in the system $(*)$. Hence, except for a discrete set of values, the curves of the $2$-parameter family of solutions are immersed. 
\bigskip

In higher dimension, one proceeds analogously, introducing an orthonormal frame $(\nu_1, \ldots , \nu_{k-1})$ of
 $Span ( \al, \al') ^{\perp}$ along the curve and introducing the curvatures $\ka_i$ of $\al$ (see \cite{Ku}). Writing
$$\ga = \la \al + A \al' +  \sum_{i=1}^{k-1} B_i \nu_i$$ 
in the orthonormal frame $(\al, \al', \nu_1, \ldots , \nu_{k-1})$, 
and using the Fr\'enet equations, we
 obtain an ordinary differential system, which is linear but non-autonomous, depending on $\la$ and the curve $\al$ via its curvatures functions $\ka_i$, whose unknown functions are $A$ and $B_1, \ldots , B_{k-1}$.
\begin{rema}
Note that we are in fact here in a case where Corollary \ref{feuilletage cor} holds: all the hypotheses of the corollary are trivially satisfied since $n=1.$
\end{rema}
\subsection{Hypersurfaces in space forms}\label{subsection space forms}
We first observe that the whole construction can be generalized \em verbatim \em to the case of Gauss maps of pseudo-Riemannian submanifolds of pseudo-Euclidean space, i.e.\ immersions
of $\R^m$ endowed with the canonical flat pseudo-Riemannian metric of signature $(p,m-p)$
$$  \<\cdot,\cdot\>_p := -dx_1^2 - \ldots - dx_p^2 + dx_{p+1}^2 + \ldots + dx_{m}^2,$$
whose induced metric is non-degenerate. All tangent spaces of a non-degenerate immersion have the same signature, say $(q,n-q)$, so for such immersions we may introduce the affine Gauss map, which is valued in the Grassmannian of affine planes of $(\R^m, \<\cdot,\cdot\>_p)$ with signature $(q,n-q)$. We shall be concerned with submanifolds immersed in the hyperquadrics
$$\mathbb Q^{m-1}_{p,\eps r}=\big\{ x \in \R^{n+2}:\ \<x,x\>^2_p= \eps r^2 \big\},$$
where $\eps=\pm 1$ and $r>0$.

\begin{lemm}\label{lemme hyperquadric}
Let  $\varphi: \M \rightarrow \R^{m}$ an immersion and assume $\M$ is connected. Denote $\ove{\varphi}: \M \to  \mathcal{Q}$ its affine Gauss map. Hence   $\<\varphi, \varphi\>_p=const.$ if and only if $v=\pi' \circ \ove{\varphi}=0$. In other words,
an immersed submanifold of $\R^m$  is in addition contained in a hyperquadric $\mathbb Q^{m-1}_{p,\eps r}$ if and only if  its affine normal spaces (in $\R^m$) are actually vectorial.
\end{lemm}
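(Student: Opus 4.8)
The plan is to differentiate the function $x\mapsto\<\varphi(x),\varphi(x)\>_p$ and read off both implications from the vanishing of its differential. The key structural input is the orthogonal decomposition of the position vector already used in the proof of Theorem \ref{first theorem}: writing $\varphi=v+s$ according to the splitting $\R^m=T_x\s\oplus N_x\s$, the tangential part is exactly $v=\pi'\circ\ove{\varphi}$ while $s$ is normal. Since $\<\cdot,\cdot\>_p$ is a fixed bilinear form, I would first compute
$$d\<\varphi,\varphi\>_p(X)=2\<d\varphi(X),\varphi\>_p=2\<d\varphi(X),v\>_p,\qquad\forall X\in T\M,$$
where the second equality uses that $d\varphi(X)$ is tangent to $\s$ whereas $s$ is normal, so that the $s$-contribution drops out.

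The direction "$v=0\Rightarrow\<\varphi,\varphi\>_p$ constant" is then immediate: the displayed derivative vanishes identically, hence $\<\varphi,\varphi\>_p$ is locally constant and therefore constant, since $\M$ is assumed connected. For the converse, suppose $\<\varphi,\varphi\>_p$ is constant. The displayed formula gives $\<d\varphi(X),v\>_p=0$ for all $X$; because $\varphi$ is an immersion, $d\varphi_x(T_x\M)=T_x\s$ is precisely the full tangent space, so $v(x)\in T_x\s$ is orthogonal, with respect to the induced metric, to all of $T_x\s$. Here I would invoke the standing hypothesis that the immersion is non-degenerate: the induced metric on $T_x\s$ is non-degenerate, which forces $v(x)=0$. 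This non-degeneracy step is the only place where the pseudo-Riemannian signature genuinely enters, and is the main point to handle with care — in the Riemannian case it is automatic, but in general it is exactly what excludes $v$ lying in a null direction of the tangent space while still being $\<\cdot,\cdot\>_p$-orthogonal to it.

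Finally, the reformulation in terms of the affine Gauss map is a restatement rather than a separate argument: $v=\pi'\circ\ove{\varphi}=0$ means that each affine normal space $\ove{\varphi}(x)=v(x)+p_o^{\perp}$ reduces to the linear subspace $p_o^{\perp}$, i.e.\ the normal spaces are vectorial; and $\<\varphi,\varphi\>_p=\mathrm{const}$ (with the appropriate sign and nonzero value) is exactly the condition that $\varphi(\M)$ be contained in a hyperquadric $\mathbb Q^{m-1}_{p,\eps r}$. Thus the two equivalences combine to yield the statement.
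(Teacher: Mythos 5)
Your proof is correct and follows essentially the same route as the paper's: both differentiate $\<\varphi,\varphi\>_p$, use that $d\varphi$ is valued in the tangent space, and conclude from connectedness in one direction and from the orthogonality $\<d\varphi(X),\varphi\>_p=0$ in the other. Your version is slightly more explicit in isolating the tangential component $v$ and in flagging that the non-degeneracy of the induced metric is what forces $v=0$ in the pseudo-Riemannian setting — a point the paper leaves implicit in the step ``$\<d\varphi,\varphi\>_p=0$ implies $\varphi$ is a normal vector.''
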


\begin{proof}
Assume that $ \varphi \in \mathbb Q^{m-1}_{p,\eps r}$. Differentiating the equation $\<\varphi, \varphi\>_p=const$ yields $\<d \varphi, \varphi\>_p=0$, which implies that $\varphi$ is a normal vector. Hence 
$$\ove{\varphi}=\varphi + (Im\ d\varphi)^\perp = (Im\ d\varphi)^{\perp},$$ 
so $\ove{\varphi} \in  \mathcal{Q}_o.$ Conversely, the assumption $\ove{\varphi} \in  \mathcal{Q}_o$ implies that $\varphi \in (Im\ d\varphi)^{\perp}$, i.e.\ $\<d \varphi, \varphi\>_p=0$. Since $\M$ is connected, it follows that $\<\varphi, \varphi\>_p=const$.
\end{proof}

\begin{lemm} If  $\ove{\varphi}: \M \rightarrow  \mathcal{Q}$ is a map satisfying $v=\pi' \circ \ove{\varphi}=0$, then $\beta$ vanishes. It follows that $s$ must be parallel with respect to $\nabla^{N}.$
\end{lemm}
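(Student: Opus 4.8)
The plan is to unwind the definition $\be=-\ove{\varphi}^*\al$ and to exploit the hypothesis $v=\pi'\circ\ove{\varphi}=0$, which says that $\ove{\varphi}$ takes values in the zero section of $\mathcal{Q}\simeq\tau_T$. The vanishing of $\be$ is then immediate, and the claim on $s$ follows from the master equation (\ref{MainEq}) of Theorem \ref{first theorem}.

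First I would use the explicit expression of the generalized tautological form, namely $\al_p(\xi)=d\pi_p(\xi)(v)$ at a point $p=(p_o,v)$, where $d\pi_p(\xi)\in L(p_o,p_o^{\perp})$ is evaluated on the vector part $v$. Pulling back along $\ove{\varphi}$ and writing $\ove{\varphi}(x)=(\ove{\varphi}_o(x),v(x))$, one gets, for $x\in\M$ and $X\in T_x\M$,
$$\be_x(X)=-\al_{\ove{\varphi}(x)}\big(d\ove{\varphi}_x(X)\big)=-\,d\pi_{\ove{\varphi}(x)}\big(d\ove{\varphi}_x(X)\big)\big(v(x)\big).$$
Since $v\equiv 0$ by hypothesis, the linear map $d\pi_{\ove{\varphi}(x)}(d\ove{\varphi}_x(X))$ is evaluated at the zero vector, so $\be_x(X)=0$ for all $x$ and $X$; hence $\be=0$.

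Equivalently, and this is the computation I would actually record since it is the cleanest, I would use the second description $\al=\al'$ with $\al'_p(\xi)=(d\pi'_p(\xi))^N$ from (\ref{def alphap}). Then $\ove{\varphi}^*\al=\ove{\varphi}^*\al'$ has the value $(d(\pi'\circ\ove{\varphi})_x(X))^N=(dv_x(X))^N$ at $(x,X)$; as $v\equiv 0$ we have $dv=0$, so again $\be=-\ove{\varphi}^*\al=0$.

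Finally, the statement about $s$ follows from Theorem \ref{first theorem}: the support function $s\in\Gamma(\E)$ associated with $\ove{\varphi}$ is governed by $\nablaN s=\be$; with $\be=0$ this reduces to $\nablaN s=0$, i.e. $s$ is parallel for $\nablaN$. There is no real obstacle here: the only points requiring care are keeping track of the identification $T_{p_o}\mathcal{Q}_o\simeq L(p_o,p_o^{\perp})$ so that the evaluation ``at $v$'' is legitimate, and noting that the two formulas for $\al$ agree (as established in Section \ref{Liouville general}). Geometrically this is consistent with Lemma \ref{lemme hyperquadric}: a vanishing vector part corresponds to submanifolds contained in a hyperquadric, for which $\nablaN$-parallel support functions are exactly what one expects.
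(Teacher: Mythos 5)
Your proof is correct, and it coincides with the argument the paper intends (the lemma is stated without proof there precisely because it follows immediately from either description of $\al$): with $v\equiv 0$ one gets $\be=-\ove{\varphi}^*\al=-(dv)^N=0$, and then $\nablaN s=\be=0$ by Theorem \ref{first theorem}. Nothing is missing.
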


We assume now that $k=2$: let $\ove{\varphi}: \M \rightarrow  \mathcal{Q}_o=G(n,n+2)$. We write $\ove{\varphi}= (e_1\wedge e_2)^\perp$, where 
$(e_1,e_2)$ is an orthonormal frame of $\ove{\varphi}^\perp$ with $\<e_1,e_1\>_p=1$ and $\<e_2,e_2\>_p=\eps,$ $\eps=\pm 1.$ Hence if $\eps=1$ the plane $e_1 \wedge e_2$  has positive definite metric, and if $\eps=-1$ it has indefinite metric.

If $\varphi=s$ is a section of $\E$ with $\<s, s\>=1$, there exists $\te \in C(\M)$ such that
$\varphi = \co(\te) e_1 + \sinu (\te) e_2$, where $(\co,\sinu)=(\cos ,\sin)$ (resp.\ $(\cosh,\sinh)$) if $\eps=1$ (resp.\ $\eps=-1$).
Observe that a unit normal vector along $\varphi$ is given $N:=-\eps \, \sinu (\te) e_1 + \co(\te) e_2$.

By the lemma above, the fact that $\ove{\varphi}$ is the affine Gauss map of $\varphi$ is equivalent to the vanishing  of $(d\varphi)^N.$
We have
$$d\varphi = (-\eps \, \sinu (\te) e_1 + \co(\te) e_2) d\te + \co (\te) de_1 + \sinu (\te) de_2.$$
Using the fact that $\<de_1,e_1\>_p$ and $\<de_2,e_2\>_p$ vanish and $\<de_1, e_2\>_p=-\<e_1, de_2\>_p$, we calculate
\begin{eqnarray*} (d\varphi)^N&=&  (-\eps \, \sinu (\te) e_1 + \co(\te) e_2) d\te + \co (\te) \<de_1,e_2\>_p e_2 + \eps \sinu (\te) \<de_2,e_1\>e_1\\
 & = & (-\eps \, \sinu (\te) e_1 + \co(\te) e_2) (d\te - \eps \<de_2,e_1\>_p)\\
&=& (d\te - \eps \<de_2,e_1\>_p)N.
\end{eqnarray*}
The latter vanishes if and only if $d\te = \eps \< de_2,e_1\>_p$. Such a map $\te \in C(\M)$ exists (at least locally) if and only if the one-form $\<de_2, e_1\>_p$ is closed. This is exactly saying that the immersion $\ove{\varphi}: \M \rightarrow  \mathcal{Q}_o=G(n,n+2) \simeq G(2,n+2)$ is Lagrangian with respect to the natural symplectic structure of $G(2,n+2)$. 

\begin{rema}
This result may be also obtained as follows: we first note that the natural symplectic structure on $\mathcal{Q}_o\simeq G(2,n+2)$ may be interpreted as the curvature form $\omega_o\in\Omega^2(\mathcal{Q}_o,End(\tau_N))$ of $\tau_N\rightarrow \mathcal{Q}_o:$ this curvature form is indeed a $2$-form with values in the skew-symmetric operators acting on $\tau_N,$ and may thus be naturally identified to a real form (since here the rank of $\tau_N$ is 2). Finally, the existence of a non-trivial parallel section of $\ove{\varphi}_o^{*}\tau_N\rightarrow \M$ is obviously equivalent to the vanishing of the curvature  $\ove{\varphi}_o^{*}\omega_o,$ since the rank of the bundle is 2.
\end{rema}

In the case that $\overline{\varphi}:\M\rightarrow\mathcal{Q}_o$ is Lagrangian, given a solution $\varphi =  \co(\te) e_1 + \sinu (\te) e_2$, the other ones take the form
\begin{eqnarray*} \varphi_t &=&   \co(t) \varphi+ \sinu(t) N \\
 &=& \co(\te+t) e_1 + \sinu (\te+t) e_2,
\end{eqnarray*}
where $t$ is a real constant (if $\eps=1$, $t $ is defined $mod \, 2\pi$).
We claim that if $\ove{\varphi}$ is an immersion, then $\varphi_t$ is an immersion except for at most $n=m-2$ 
distinct values of $t $
(distinct  $mod \, \pi$ if $\eps=1$).
 To see this, observe first that, taking into account that $d\te= \eps \<de_2,e_1\>_p = -\eps \<de_1,e_2\>_p$,
$$ d\varphi_t = (d\varphi_t)^T = \co(\te+t) (de_1)^T +   \sinu(\te+t) (de_2)^T.$$
Hence if there exists more than $n$ (the dimension of $T\M$) distinct values of $t$ such that $Ker (d\varphi_t) \neq \{0\}$, there must be a pair $(t,t')$ (with $t \neq t' \, mod \, \pi$ if $\eps=1$ and $t \neq t'$ if $\eps=-1$) such that
$$(d\varphi_t)(X)= (d\varphi_{t'})(X)=0$$ 
for some non vanishing vector $X \in T\M$. Therefore
$$(de_1)^T(X)=  (de_2)^T (X)=0$$
which implies that $\ove{\varphi}= (e_1\wedge e_2)^\perp$ is not an immersion since
$$ d(e_1\wedge e_2) =de_1 \wedge e_2 + e_1 \wedge de_2 =(de_1)^T \wedge e_2 + e_1 \wedge (de_2)^T.$$
 In particular ($\eps=1$), we obtained \cite{An}:
\begin{theo} 
Let us consider $\ove{\varphi}_o:\M\rightarrow G(2,n+2),$ a $n$-parameter family of geodesic circles of $\S^{n+1}.$ We moreover assume that it is an immersion. There exists a hypersurface of $\S^{n+1}$ orthogonal to the family $\ove{\varphi}_o$ if and only if the Lagrangian condition $\ove{\varphi}_o^{*}\omega_o=0$ holds. When this is the case, there is a one-parameter family of such integral hypersurfaces which form a parallel family (with at most $n$ singular leaves).
\end{theo}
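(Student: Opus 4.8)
The plan is to deduce the theorem as the $\eps=1$ instance of the space-form analysis carried out just above, and to restate its conclusions in terms of great circles of $\S[n+1]$. First I would observe that a hypersurface $\s\subset\S[n+1]$ crosses the family $\ove{\varphi}_o$ orthogonally exactly when, viewed as a submanifold of $\R[n+2]$, it is an integral submanifold of the abstract affine Gauss map whose normal $2$-planes are the planes $e_1\wedge e_2$ underlying $\ove{\varphi}_o$. By Lemma~\ref{lemme hyperquadric} the hypothesis $\s\subset\S[n+1]$ is precisely what forces these normal spaces to be vectorial, i.e.\ $v=\pi'\circ\ove{\varphi}=0$, so we are placed in the setting $\ove{\varphi}:\M\to\mathcal{Q}_o\simeq G(2,n+2)$ already discussed. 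Writing $\ove{\varphi}_o=(e_1\wedge e_2)^{\perp}$ for an orthonormal frame $(e_1,e_2)$ of $\ove{\varphi}_o^{\perp}$, I would look for a solution in the form $\varphi=\cos(\te)\,e_1+\sin(\te)\,e_2$, a point of the corresponding great circle.

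The equivalence with the Lagrangian condition is then immediate from the computation performed above: orthogonality $(d\varphi)^N=0$ collapses to the single scalar equation $d\te=\langle de_2,e_1\rangle$, and a function $\te$ solving it exists locally if and only if the one-form $\langle de_2,e_1\rangle$ is closed, which is exactly the condition $\ove{\varphi}_o^{*}\omega_o=0$ that $\ove{\varphi}_o$ be Lagrangian (the curvature $\omega_o$ of $\tau_N$ taking skew-symmetric values on a rank-two bundle, hence identifiable with a real two-form on $G(2,n+2)$). For the forward direction this $\te$ produces a formal solution $\varphi$; the existence of a genuine \emph{immersed} hypersurface then follows once we know that $\varphi$, or one of its rotations, is an immersion, which is the content of the last step.

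Granting the Lagrangian condition, I would generate the one-parameter family by rotating the solution inside its normal plane: with $N=-\sin(\te)e_1+\cos(\te)e_2$ the unit normal, the maps $\varphi_t:=\cos(t)\,\varphi+\sin(t)\,N=\cos(\te+t)\,e_1+\sin(\te+t)\,e_2$ are again solutions for each real constant $t$ (defined $\mathrm{mod}\ 2\pi$), and every solution has this form, since a second one can differ from $\varphi$ only through its phase $\te$. That these hypersurfaces are mutually equidistant is Remark~\ref{equidistant}, so the family is parallel and, as in the general construction underlying Corollary~\ref{feuilletage cor}, defines a local Riemannian foliation.

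The delicate point, and the one I expect to be the main obstacle, is the bound on the number of singular leaves. Differentiating gives $d\varphi_t=\cos(\te+t)(de_1)^T+\sin(\te+t)(de_2)^T$. Two facts drive the count. First, for $t\neq t'\ \mathrm{mod}\ \pi$ the pair $(d\varphi_t,d\varphi_{t'})$ is obtained from $((de_1)^T,(de_2)^T)$ through the matrix $\left(\begin{smallmatrix}\cos(\te+t)&\sin(\te+t)\\ \cos(\te+t')&\sin(\te+t')\end{smallmatrix}\right)$, whose determinant $\sin(t'-t)$ is nonzero; hence a nonzero vector $X$ annihilated by both $d\varphi_t$ and $d\varphi_{t'}$ would be annihilated by $(de_1)^T$ and $(de_2)^T$, forcing $d(e_1\wedge e_2)(X)=(de_1)^T(X)\wedge e_2+e_1\wedge(de_2)^T(X)=0$ and contradicting that $\ove{\varphi}_o=(e_1\wedge e_2)^{\perp}$ is an immersion. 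Second, at each point the function $t\mapsto\det(d\varphi_t)$, expressed through the angle $\psi:=\te+t$, expands multilinearly into a trigonometric polynomial $D(\psi)$ of degree $n$ satisfying $D(\psi+\pi)=(-1)^nD(\psi)$, so that, unless it degenerates, it has at most $n$ zeros $\mathrm{mod}\ \pi$. Combining the two, $\varphi_t$ can fail to be an immersion for at most $n$ values of $t$ $\mathrm{mod}\ \pi$. Making this count fully rigorous --- ruling out the degenerate case $D\equiv 0$ and pinning down the pointwise-versus-global reading of ``singular leaf'' --- is where I expect the real work to lie, and it is exactly here that the immersion hypothesis on $\ove{\varphi}_o$ is indispensable.
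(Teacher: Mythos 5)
Your proposal follows essentially the same route as the paper's own treatment in Section \ref{subsection space forms}: reduction to the vectorial case $v=0$ via Lemma \ref{lemme hyperquadric}, the ansatz $\varphi=\cos(\te)\,e_1+\sin(\te)\,e_2$ leading to $d\te=\langle de_2,e_1\rangle$ and the equivalence with closedness, i.e.\ with $\ove{\varphi}_o^*\omega_o=0$, the rotated family $\varphi_t$ with equidistance from Remark \ref{equidistant}, and the same pairwise-common-kernel observation for the count of singular leaves. On the one point you flag as delicate: the degenerate case $D\equiv 0$ is not excluded by the immersion hypothesis alone at the level of raw linear algebra, but it is excluded here because the Lagrangian condition is precisely the flatness of $(\E,\nablaN)$, so Lemmas \ref{lem 1 sec 6} and \ref{lem 2 sec 6} make the operators $\Phi^{-1}B(e_1)$ and $\Phi^{-1}B(e_2)$ simultaneously diagonalizable and $D(\psi)$ factors as $\prod_{j=1}^{n}\bigl(\cos\psi\,\lambda_j^{(1)}+\sin\psi\,\lambda_j^{(2)}\bigr)$, each factor being a nonzero linear form in $(\cos\psi,\sin\psi)$ by (\ref{cond lambdas}); the paper itself leaves the count at the level of your ``first fact,'' so your determinant supplement is a reasonable (and arguably necessary) completion rather than a departure.
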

\begin{rema}
It appears here that condition (\ref{nec cond isom sym th}) is not necessary: it may be proved that it is indeed a consequence of the other hypotheses (immersions of co-dimension 1 in space forms).
\end{rema}

\subsection{Submanifolds with flat normal bundle in space forms}
The previous section may be generalized to higher co-dimension ($k\geq 3$) as follows: suppose that $\ove{\varphi}:\M\rightarrow\mathcal{Q}$ is an immersion such that $v\equiv 0$ and (\ref{nec cond isom sym th}) holds; then there exists a submanifold $\s$ of $(\R^m,\<\cdot,\cdot\>_p)$ with normal affine Gauss map $\ove{\varphi}$ and flat normal bundle if and only if 
$$\ove{\varphi}^*\omega_o=0$$ 
where $\omega_o\in\Omega^2(\mathcal{Q}_o,End(\tau_N))$ is the curvature form of $\tau_N\rightarrow\mathcal{Q}_o.$ This is a consequence of Corollary \ref{feuilletage cor} and of the fact that (\ref{condition lagrangien th}) obviously holds in that case (since $\beta=-\ove{\varphi}^*\alpha=0$ when $v=0$). In view of Lemma \ref{lemme hyperquadric} the submanifold is contained in some $\mathbb Q^{m-1}_{p,\eps r}$. 

\subsection{Surfaces with non-vanishing normal curvature in 4-dimen\-sional space forms}
We assume here that $n=2$ and $k=3,$ and suppose that $\ove{\varphi}:\M\rightarrow\mathcal{Q}$ is an immersion such that $v\equiv 0,$ (\ref{nec cond isom sym th}) holds, and, in the splitting (\ref{splitting E}),
$$rank\ Ker\ \L=1\hspace{.5cm}\mbox{and}\hspace{.5cm}rank\ (Ker\ \L)^{\perp}=2.$$
We note that $\beta$ and $\gamma$ are zero here (since $v=0$), and thus that the necessary and sufficient conditions (\ref{compatibility conditions}) hold in that case. In view of (\ref{general solution}), a solution $s$ is just a parallel section of $Ker\ \L.$  We show independently of Section \ref{section construction} that such a section exists: we fix a local section $\sigma\in\Gamma(Ker\ \L)$ which does not vanish, and determine a function $f$ 
such that $s=e^f \si$ is parallel. The condition $\nablaN s=0$ reads
$$df=-\mu,$$
where $\mu$ is such that $\nablaN \si=\mu \otimes \si.$ The form $\mu$ is closed: this is a direct consequence of $R^\nabla(X,Y)\si=0$ for all $X,Y\in T\M$ ($\si$ is a section of $Ker\ \L$). The solution $s$ is unique, up to homothety. It defines an immersion with normal affine Gauss map $\ove{\varphi}$ which is valued in a hyperquadric ($v=0$). Its normal bundle in the hyperquadric identifies to $(Ker\ \L)^{\perp}$ and its normal curvature to the restriction of $\L$ to this bundle; thus, its normal curvature does not vanish. 

\section{The curvature of a congruence}\label{section 8}
The aim of this section is to introduce the curvatures of a general family of affine $k$-spaces in $\R^m.$ We begin with a formula expressing the curvatures of a submanifold in terms of its Gauss map, we then propose a general definition, and we finally establish a Gauss-Bonnet type formula for a 2-parameter family of affine planes in $\R^4.$ The results of this section generalize results in \cite{GK} concerning a general congruence of lines in $\R^3.$
\subsection{The curvature of a submanifold in terms of its Gauss map}
The curvature tensor of the tautological bundles on the Grassmannnian may be easily described if we consider the fibres of the bundles $T\mathcal{Q}_o$, $\tau_T$ and $\tau_N$ on $\mathcal{Q}_o$ as subsets of the Clifford algebra  $Cl(\R^m),$ endowed with the bracket $[\cdot,\cdot]$ defined by 
$$[\eta,\eta']=\frac{1}{2}\left(\eta\cdot\eta'-\eta'\cdot\eta\right), \,\,\, \forall \eta,\eta'\in Cl(\R^m),$$
where the dot $"\cdot"$ is the Clifford product in $Cl(\R^m).$ See e.g.\ \cite{F} for the basic properties of the Clifford algebras.
\begin{theo}\label{th expr curv}
The curvature tensor of the tautological bundles on the Grassmannnian is given by
\begin{equation}\label{curvature double bracket}
R(u,v)\xi=\epsilon_n\, [\xi,[u,v]]
\end{equation}
for all $u,v\in T_{p_o}\mathcal{Q}_o$ and $\xi\in {\tau_T}_{p_o}\oplus{\tau_N}_{p_o},$ where $\epsilon_n=(-1)^{\frac{n(n+1)}{2}+1},$ or, more concisely, 
\begin{equation}
R(u,v)=\epsilon_n\, [u,v]\hspace{0.3cm}\in\hspace{.3cm}\Lambda^2p_o\oplus\Lambda^2p_o^\perp,
\end{equation}
where an element of $\Lambda^2p_o\oplus\Lambda^2p_o^\perp$ is regarded as a skew-symmetric operator on $p_o\oplus p_o^{\perp}$ (by the natural identification, or, equivalently, using the bracket, as in (\ref{curvature double bracket})). 
\end{theo}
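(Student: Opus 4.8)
The plan is to prove the curvature formula by a direct local computation, exploiting the homogeneity of the Grassmannian to reduce everything to a single convenient point and then passing to the Clifford-algebra description. First I would fix a point $p_o\in\mathcal{Q}_o$ with an adapted orthonormal basis $(e_1,\ldots,e_m)$ of $\R^m$ so that $p_o=e_1\wedge\cdots\wedge e_n$ and $p_o^\perp=\mathrm{Span}(e_{n+1},\ldots,e_m)$. Using the identification $T_{p_o}\mathcal{Q}_o\simeq L(p_o,p_o^\perp)$, a tangent vector $u$ corresponds to a matrix whose entries are the pairings $\langle u(e_a),e_\mu\rangle$ for $1\le a\le n$, $n+1\le\mu\le m$, and I would write down an explicit local curve $p_o(t)$ in $\mathcal{Q}_o$ realizing a given pair of tangent directions, for instance by rotating the basis vectors by small angles in the coordinate planes $e_a\wedge e_\mu$.

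Next I would compute the connection and its curvature on the tautological bundles $\tau_T$ and $\tau_N$ directly from the definition \eqref{covariant derivatives}: a section $\xi$ is a map into $\R^m$ landing in $p_o$ (resp.\ $p_o^\perp$), and the induced covariant derivative is the tangential (resp.\ normal) part of the ordinary derivative. Differentiating twice along the explicit curves and antisymmetrizing gives $R(u,v)\xi$ as an explicit combination of the basis vectors; the key is that the tangential and normal projections, applied in succession, produce exactly the ``second-order'' mixing that the iterated bracket encodes. I would carry this out first in the rank-one model directions $u=e_a\otimes e_\mu$ (meaning the map sending $e_a\mapsto e_\mu$ and killing the other basis vectors of $p_o$) and $v=e_b\otimes e_\nu$, reducing the general case to bilinearity and skew-symmetry in $(u,v)$.

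The translation into Clifford language is the heart of the argument. Under the standard embedding $\Lambda^\bullet\R^m\hookrightarrow Cl(\R^m)$, a tangent vector $u\in L(p_o,p_o^\perp)$ is realized inside the Clifford algebra, and the bracket $[u,v]=\tfrac12(u\cdot v-v\cdot u)$ of two such elements lands in $\Lambda^2 p_o\oplus\Lambda^2 p_o^\perp$, acting as a skew-symmetric operator on $p_o\oplus p_o^\perp$ via the double bracket $\xi\mapsto[\xi,[u,v]]$. I would verify the identity \eqref{curvature double bracket} by comparing both sides on the generators $e_a\otimes e_\mu$, $e_b\otimes e_\nu$: this reduces to an elementary but careful evaluation of triple Clifford products $e_\mu\cdot e_a\cdot e_b\cdot e_\nu$ and their brackets, using only the defining relations $e_i\cdot e_j+e_j\cdot e_i=-2\delta_{ij}$ (or the sign convention fixed earlier via \cite{F}). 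The overall constant $\epsilon_n=(-1)^{n(n+1)/2+1}$ emerges from the number of transpositions needed to move a Clifford factor past the $n$-vector $p_o$ when commuting $\xi$ through, which is exactly the parity governing the sign in the Hodge-type identifications used in Section~\ref{Liouville}.

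The main obstacle I anticipate is bookkeeping the signs: both the identification $T_{p_o}\mathcal{Q}_o\simeq L(p_o,p_o^\perp)$ (which already carried a minus sign in \eqref{def alpha 1}) and the Clifford relations introduce sign factors that must be reconciled to land on precisely $\epsilon_n$ rather than $-\epsilon_n$. To control this I would pin down all conventions at a single generator pair and check the scalar curvature-type trace in the lowest-dimensional case (for instance $(n,k)=(1,1)$ or $(2,2)$, where $\mathcal{Q}_o$ is a sphere or a product of spheres whose curvature is classically known) as an independent consistency test. Once the generators are matched and the constant is fixed by this sanity check, bilinearity and the homogeneity of $\mathcal{Q}_o$ under $SO(m)$ upgrade the pointwise identity to the stated tensorial formula at every point, completing the proof.
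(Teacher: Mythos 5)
Your strategy is viable and would yield the theorem, but it is organized quite differently from the paper's proof in Appendix \ref{app curv tautological bundles}. There the authors do not compute on rank-one generators: they take arbitrary $u,v,\xi$ in a synchronous gauge ($dv(u)-du(v)=0$ and $\nabla\xi=0$ at $p_o$), reduce the curvature to $R(u,v)\xi=\big(d(u(\xi))(v)-d(v(\xi))(u)\big)^T$, and then isolate one structural identity (Lemma \ref{lemmeClifford}), namely $u(\xi)=-\tfrac{\epsilon_n}{2}\left(\xi\cdot p\cdot u+u\cdot p\cdot\xi\right)$, which they differentiate and antisymmetrize; the double bracket and the constant $\epsilon_n$ then fall out of $p_o\cdot p_o=-\epsilon_n$ and $u\cdot p_o=-p_o\cdot u$. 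Your generator-by-generator verification buys concreteness and a built-in sanity check (the $G_{4,2}\simeq\S^2\times\S^2$ case), at the cost of heavier bookkeeping; the paper's route buys a reusable formula, which is what makes Corollary \ref{coro app normal curvature} an immediate consequence. Two points in your outline need tightening. First, a single curve $p_o(t)$ cannot realize a pair of tangent directions: to compute $R(u,v)\xi$ you need a two-parameter family or, better, the graph chart $A\in L(p_o,p_o^\perp)\mapsto\mathrm{graph}(A)$, whose coordinate fields commute so that no $\nabla_{[u,v]}$ correction appears (one-parameter rotation groups do not commute and would reintroduce that term). Second, the Clifford computation is not merely an evaluation of products of four generators: the tangent vectors $u,v$, viewed in $\Lambda^n\R^m\subset Cl(\R^m)$, are $n$-vectors, so $u\cdot v$ is a product of $2n$ generators, and collapsing the $n-1$ repeated factors is exactly where the identity $(e_1\cdot\ldots\cdot e_n)^2=-\epsilon_n$ --- i.e.\ the content of Lemma \ref{lemmeClifford} --- enters; without that step the claim that $[u,v]$ even lands in $\Lambda^2p_o\oplus\Lambda^2p_o^\perp$ is not yet justified. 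You gesture at this (``moving a Clifford factor past the $n$-vector $p_o$''), so the issue is one of emphasis rather than a genuine gap, but it is the heart of the argument and should be made explicit.
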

The proof of this theorem is given in Appendix \ref{app curv tautological bundles}. With this result, the curvature of the Levi-Civita and the normal connections of an immersed submanifold $\varphi:\M\rightarrow\R^m$ with Gauss map $\overline{\varphi}_o:\M\rightarrow\mathcal{Q}_o$ is
$$R^{\nabla^T\oplus\nabla^N}(u,v)=\epsilon_n\, [d{\ove{\varphi}_o}_x(u), d{\overline{\varphi}_o}_x(v)], \,\,\,\forall u,v\in T_x\M;$$
in this last expression the bracket 
$$[d{\overline{\varphi}_o}_x(u), d{\overline{\varphi}_o}_x(v)]\hspace{0.3cm}\in\hspace{.3cm}\Lambda^2\overline{\varphi}_o(x)\oplus \Lambda^2\overline{\varphi}_o(x)^{\perp}$$
is a skew-symmetric operator on ${\overline{\varphi}_o(x)}\oplus{\overline{\varphi}_o(x)}^{\perp}\simeq T_x\varphi(\M)\oplus N_x\varphi(\M).$ This is a consequence of the following simple remark:

\begin{rema}\label{rmk identification}
If $\varphi:\M\rightarrow\R^m$ is an immersion with affine Gauss map $\ove{\varphi},$ then the bundles $\ET\rightarrow \M$ and $\E \rightarrow \M$ naturally identify with the tangent and the normal bundle of $\s:=\varphi(\M)$ respectively (we should say, with the pull-backs of these bundles on $\M$); moreover, under these identifications and by (\ref{covariant derivatives}), the connections $\nabla^{T}$ and $\nabla^{N}$ identify with the Levi-Civita and the normal connections of $\s.$ In particular, the Gauss and the normal curvature tensors of $\s$ naturally identify with the curvature tensors of $\nabla^{T}$ and $\nabla^{N}$ respectively; thus, these tensors identify with the pull-backs of the curvature tensors of the tautological bundles $\tau_T\rightarrow\mathcal{Q}_o$ and $\tau_N\rightarrow\mathcal{Q}_o$ by the Gauss map $\ove{\varphi}_o:\M\rightarrow\mathcal{Q}_o.$
\end{rema}

\subsection{Generalized curvature tensor of a congruence}
\begin{defi}\label{def general curvature}
The curvature of a congruence $\overline{\varphi}:\M\rightarrow\mathcal{Q}$ is the curvature of the tautological bundle $\ET\oplus\E$; it is given by
$$R(u,v):=\epsilon_n\ [d{\ove{\varphi}_o}_x(u), d{\ove{\varphi}_o}_x(v)]\in \Lambda^2\ove{\varphi}_o(x)\oplus \Lambda^2\overline{\varphi}_o(x)^{\perp}, \,\,\, \forall u,v\in T_x\M:$$
 this is a 2-form on $\M$ with values in the skew-symmetric operators of  the fibres of $\ET\oplus\E$.
\end{defi}
We note that, even in the case of a congruence of lines in $\R^3,$ we cannot attach a number to the curvature tensor of a congruence, since we do not have a natural metric, nor a natural volume form, on $\M$; nevertheless, if a point $\lambda\in\ove{\varphi}_o(x)^\perp$ is additionally given, we can define a natural inner product on $T_x\M,$ and then define real valued curvatures: they will depend on $x$ and $\lambda$. To motivate the definition of this inner product, we first assume that the congruence $\ove{\varphi}:\M\rightarrow\mathcal{Q}$ is integrable, i.e.\ that there exists an immersion $\varphi=v+s$ of $\M$ in $\R^m$ with Gauss map $\ove{\varphi};$ we note that
$$d\varphi=(d\varphi)^T=(dv)^T+(ds)^T=\nablaT v-B(s),$$
where $B$ was introduced before. Thus the map
$$\nablaT v-B(s):T_x\M\rightarrow T_{\varphi(x)}\s$$ 
is the natural identification between $T_x\M$ and $T_{\varphi(x)}\s,$ and the natural inner product induced on $T_x\M$ is such that this map is an isometry. We now come back to the general case, and only assume that a congruence $\overline{\varphi}:\M\rightarrow\mathcal{Q}$ is given ($\overline{\varphi}$ is not necessarily integrable); if $\lambda$ belongs to the plane $\ove{\varphi}_o(x)^\perp,$ and if we assume that the map
$$\nablaT v-B(\lambda):T_x\M\rightarrow \R^m$$
is one-to-one, we can endow $T_x\M$ with the metric such that this map is an isometry. Now, at each point $(x,\lambda)$ belonging to the congruence, we have:
\begin{itemize}
\item[---] a curvature tensor $R:\ \Lambda^2T_x\M\rightarrow \Lambda^2\overline{\varphi}_o(x)\oplus \Lambda^2\overline{\varphi}_o(x)^{\perp};$

\item[---] metrics on $\overline{\varphi}_o(x),$ $\overline{\varphi}_o(x)^{\perp}$ and $T_x\M.$
\end{itemize}

In the case of a congruence of lines in $\R^3,$ a real valued curvature may thus be attached to each point $(x,\lambda)$ of the congruence: this is the number $K(x,\lambda)$ such that
$$R(u,v)=\big( K(x,\lambda)dA_{x,\lambda}(u,v)\big)\ \ove{\varphi}_o(x)$$ 
$\forall u,v\in T_x\M,$ where $dA_{x,\lambda}$ is the area form on $\M$ induced by the metric at $(x,\lambda)$ (observe that $\Lambda^2\ove{\varphi}_o(x)=\R\ \ove{\varphi}_o(x)$ and $\Lambda^2\ove{\varphi}_o(x)^{\perp}=0$ in that case); it is not difficult to see that this definition coincides with the curvature introduced in \cite{GK}.
\\

In the case of a congruence of planes in $\R^4,$ we may  define, in a similar way,  generalized Gauss  and normal curvatures at each point $\lambda\in\ove{\varphi}_o(x)^\perp$ of the congruence. We first introduce the following 2-forms on $\M$:
$$ \hspace{-4em} \omega_T(u,v):=\< R(u,v),\ove{\varphi}_o(x)\>$$ 
and
$$ \hspace{4em}\omega_N(u,v):=\< R(u,v),\ove{\varphi}_o(x)^{\perp}\> , \quad \forall u,v\in T_x\M,$$
 where $\< \cdot, \cdot\>$ is here the canonical scalar product on $\Lambda^2\R^4$. Equivalently: 
$$R=\omega_T \ove{\varphi}_o(x)\ +\ \omega_N \ove{\varphi}_o(x)^\perp$$
(here $\Lambda^2\ove{\varphi}_o(x)=\R\ \ove{\varphi}_o(x)$ and $\Lambda^2\overline{\varphi}_o(x)^{\perp}=\R\ \ove{\varphi}_o(x)^{\perp}$). If $\lambda\in\overline{\varphi}_o(x)^\perp$ is given, we define the generalized Gauss and normal curvatures $K(x,\lambda)$ and ${K_N}(x,\lambda)\in\R$ by the formulas
$$\omega_T=K(x,\lambda)dA_{x,\lambda}\hspace{.5cm}\mbox{and}\hspace{.5cm}\omega_N={K_{N}}(x,\lambda)dA_{x,\lambda},$$
where $dA_{x,\lambda}$ is the area form on $\M$ induced by the metric at $(x,\lambda)$ introduced above. By the very definition of these quantities, if $\varphi=v+s$ is an immersion with Gauss map $\ove{\varphi}$ (assuming thus that the congruence is integrable), $K(x,s(x))$ and ${K_{N}}(x,s(x))$ coincide with the Gauss and the normal curvatures of the immersion at $x.$ 

\subsection{A Gauss-Bonnet formula for a congruence of planes in~$\R^4$}

\begin{theo}
Let $\M$ be a closed and oriented surface, and $\overline{\varphi}:\M\rightarrow\mathcal{Q}$ a congruence of affine $2$-planes in $\R^4.$ Then
$$\int_{\M}\omega_T=2\pi\chi (\ET )\hspace{1cm}\mbox{and}\hspace{1cm}\int_{\M}\omega_{N}=2\pi  \chi(\E),$$
where $\chi(\ET)$ and $\chi(\E)$ are the Euler characteristics of the tautological bundles induced on $\M$ by the Gauss map. Moreover
$$\chi(\ET)=\deg g_1+\deg g_2\hspace{.5cm}\mbox{and}\hspace{.5cm}\chi(\E)=\deg g_1-\deg g_2,$$
where $\overline{\varphi}_o=(g_1,g_2)$ in the natural identification $\mathcal{Q}_o\simeq \S^2\left(\frac{\sqrt{2}}{2}\right)\times \S^2\left(\frac{\sqrt{2}}{2}\right).$
\end{theo}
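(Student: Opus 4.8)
The plan is to reduce the two integral formulas to the classical Chern--Gauss--Bonnet theorem for rank-2 oriented vector bundles over a closed surface, and then to compute the Euler classes explicitly using the product structure of the Grassmannian $\mathcal{Q}_o$. First I would observe that, by Definition \ref{def general curvature} together with Theorem \ref{th expr curv}, the curvature $R$ of the tautological bundle $\ET\oplus\E$ splits, via $\Lambda^2\ove{\varphi}_o(x)\oplus\Lambda^2\ove{\varphi}_o(x)^\perp$, into the curvature of $\ET$ and the curvature of $\E$; since both bundles have rank $2$ and are oriented, their curvatures are (via the identification of $\Lambda^2$ of an oriented $2$-plane with $\R$) real-valued $2$-forms, and these are precisely $\omega_T$ and $\omega_N$. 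The Chern--Gauss--Bonnet theorem then gives $\int_\M \omega_T = 2\pi\chi(\ET)$ and $\int_\M \omega_N = 2\pi\chi(\E)$ directly, provided one checks that $\frac{1}{2\pi}\omega_T$ and $\frac{1}{2\pi}\omega_N$ are the Euler forms in the standard normalization (the constant $\epsilon_n$ from Theorem \ref{th expr curv} must be tracked carefully here to confirm both the sign and the factor).

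For the degree computation, I would exploit the isomorphism $\mathcal{Q}_o = G_{m,n}$ with $(n,k)=(2,2)$, i.e.\ the oriented Grassmannian $G(2,4)$, under the classical identification $G(2,4)\simeq \S^2(\tfrac{\sqrt2}{2})\times\S^2(\tfrac{\sqrt2}{2})$. This identification comes from the decomposition $\Lambda^2\R^4 = \Lambda^+\oplus\Lambda^-$ into self-dual and anti-self-dual $2$-forms under the Hodge star; a decomposable unit $2$-vector $p_o$ representing an oriented $2$-plane projects to a point in each factor $\S^2$, giving the two maps $g_1,g_2$. Writing $\ove{\varphi}_o=(g_1,g_2)$, I would identify the pulled-back tautological bundles $\ET$ and $\E$ (or rather their Euler classes) in terms of the pullbacks $g_1^*,g_2^*$ of the generators of $H^2(\S^2)$. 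The key algebraic fact is that, under $\Lambda^2\R^4\simeq\Lambda^+\oplus\Lambda^-$, the Euler class of the tangent tautological bundle corresponds to the \emph{sum} of the two generators while that of the normal tautological bundle corresponds to their \emph{difference} — this is what produces $\chi(\ET)=\deg g_1+\deg g_2$ and $\chi(\E)=\deg g_1-\deg g_2$.

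Concretely, I would first establish this on the universal bundles over $G(2,4)$ itself: compute the Euler classes $e(\tau_T)$ and $e(\tau_N)$ in $H^2(G(2,4))\simeq H^2(\S^2\times\S^2)$ and show $e(\tau_T)=a_1+a_2$, $e(\tau_N)=a_1-a_2$ where $a_1,a_2$ are the pullbacks of the area generators from the two sphere factors. Then pulling back by $\ove{\varphi}_o=(g_1,g_2)$ and integrating over $\M$ yields the stated degree formulas, since $\int_\M g_i^* a_i = \deg g_i$ by definition of the degree of a map into $\S^2$. The main obstacle I anticipate is the careful bookkeeping in the second paragraph: getting the self-dual/anti-self-dual identification aligned with the orientations and metrics so that the signs in $e(\tau_T)=a_1+a_2$ and $e(\tau_N)=a_1-a_2$ come out correctly, and reconciling the radius $\tfrac{\sqrt2}{2}$ normalization of the sphere factors with the constant $\epsilon_n$ in the curvature formula. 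The Chern--Gauss--Bonnet step itself is standard; the genuinely delicate part is the linear-algebraic identification of the two tautological Euler classes with $a_1\pm a_2$.
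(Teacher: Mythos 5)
Your proposal follows essentially the same route as the paper: the first two identities are obtained by recognizing $\frac{1}{2\pi}\omega_T$ and $\frac{1}{2\pi}\omega_N$ as the Euler forms (the paper computes the Pfaffian of the curvature $\Omega(u,v)\xi=[\xi,[u,v]]$, giving $Pf(\Omega)(u,v)=\langle[u,v],e_1\wedge e_2\rangle$, and notes that $\epsilon_2=1$), and the degree formulas come from the splitting $\Lambda^2\R^4=\Lambda^+\oplus\Lambda^-$ exactly as you describe, via the identity $\langle[u,v],p_o\rangle={\omega_1}(u^+,v^+)+{\omega_2}(u^-,v^-)$ pulled back by $\ove{\varphi}_o=(g_1,g_2)$. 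The sign bookkeeping you flag for $e(\E)$ is precisely where the minus sign arises (the Hodge star acts as $-1$ on $\Lambda^-$), and the paper likewise leaves that case as ``very similar.''
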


\begin{rema}
If the congruence is integrable we obtain the classical formulas
$$\int_{\M}KdA=2\pi  \chi(T\s)\hspace{1cm}\mbox{and}
\hspace{1cm}\int_{\M}K_NdA=2\pi   \chi(N\s)$$
where $dA$ is the area form induced by the immersion.
\end{rema}

\begin{proof}
We only prove the claims about the bundle $\tau_T,$ since the proofs concerning the bundle $\tau_N$ are very similar. Let us denote by $\Omega\in\Omega^2(\M,End(\tau_T))$ the curvature form of the tautological bundle $\tau_T\rightarrow\mathcal{Q}_o;$ by Theorem \ref{th expr curv}, it is given by
$$\Omega(u,v)(\xi)=\left[\xi,[u,v]\right], \quad \forall u,v\in T_{p_o}\mathcal{Q}_o, \, \xi\in{\tau_T}_{p_o}.$$
  It is easy to check that its Pfaffian is given by
$$Pf(\Omega)(u,v):=\<\Omega(u,v)(e_2),e_1\>=\<[u,v],e_1\wedge e_2\>, \quad \forall u,v\in T_{p_o}\mathcal{Q}_o,$$
where $(e_1,e_2)$ is a positively oriented and orthonormal basis of the fibre ${\tau_T}_{p_o},$ and where we use the same notation $\<\cdot,\cdot\>$ to denote the scalar products on $\R^4$ and on $\Lambda^2\R^4.$ Thus the Euler class of the tautological bundle $\tau_T\rightarrow\mathcal{Q}_o$ is
$$e(\tau_T)=\frac{1}{2\pi}\langle[u,v],e_1\wedge e_2\rangle,$$
and the Euler class of the induced bundle $\ET$ is
$$e(\ET)=\overline{\varphi}_o^*e(\tau_T)=\frac{1}{2\pi}\omega_T,$$
by the very definition of $\omega_T.$ Thus
$$\chi(\ET):=\int_{\M}e(\ET)=\frac{1}{2\pi}\int_M\omega_T,$$
which proves the first claim in the theorem. The last claim is a direct consequence of the formula
\begin{equation}\label{omegaT omegai}
\omega_T=g_1^*\omega_1+g_2^*\omega_2;
\end{equation}
this last formula holds, since easy calculations give the formulas
\begin{eqnarray}
\langle [u,v],p_o\rangle&=&\langle [u^+,v^+],p_o^+\rangle+\langle [u^-,v^-],p_o^-\rangle\nonumber\\
&=&{\omega_1}_{p_o^+}(u^+,v^+)+{\omega_2}_{p_o^-}(u^-,v^-),\label{omegaT omegai abstract}
\end{eqnarray}
$\forall p_o\in\mathcal{Q}_o,$ $u,v\in T_{p_o}\mathcal{Q}_o,$ where $p_o=p_o^++p_o^-,$ $u=u^++u^-,$ $v=v^++v^-$ in the splitting 
$$\Lambda^2\R^4=\Lambda^+\R^4\oplus\Lambda^-\R^4,$$ 
and where $\omega_1$ and $\omega_2$ are the area forms of $\S^2(\sqrt{2}/2)\subset \Lambda^+\R^4$ and $\S^2(\sqrt{2}/2)\subset \Lambda^-\R^4$ respectively; the pull-back of (\ref{omegaT omegai abstract}) by $\overline{\varphi}_o=(g_1,g_2)$ finally gives (\ref{omegaT omegai}).
\end{proof}

\appendix
\section{The curvature of the tautological bundles}\label{app curv tautological bundles}
We prove here Theorem \ref{th expr curv}. We only deal with the case of the tautological bundle $\tau_T,$ since the proof for the bundle $\tau_N$ is very similar. We consider $u,v\in \Gamma(T\mathcal{Q}_o)$ and $\xi\in\Gamma(\tau_T)$ such that, at $p_o,$
\begin{equation}\label{properties u v xi}
dv(u)-du(v)=0\hspace{1cm}\mbox{and}\hspace{1cm}\nablaT\xi=0.
\end{equation}
Since, in a neighborhood of $p_o,$ 
\begin{equation}\label{property dxi}
\nabla_u\xi=d\xi(u)-{d\xi(u)}^N=d\xi(u)-u(\xi),
\end{equation}
where in this last expression $u\in T_{p}\mathcal{Q}_o$ is regarded as an element of $L({\tau_T}_p,{\tau_N}_p)$, we get
\begin{eqnarray}
R^{T}(u,v)\xi&=&\nablaT_u\nablaT_v\xi-\nablaT_v\nablaT_u\xi\nonumber\\
&=& \nablaT_u(d\xi(v)-v(\xi))-\nabla_v(d\xi(u)-u(\xi))\nonumber\\
&=&\big( d(u(\xi))(v)-d(v(\xi))(u)\big)^T,\label{formula R u v}
\end{eqnarray}
where we used that $d\circ d\ \xi=0.$ The superscript $T$ means that we take the component of the vector belonging to $p_o.$ We will need the following 
\begin{lemm} \label{lemmeClifford}
For all $u\in T_{p}\mathcal{Q}_o$ and $\xi\in p,$
$$u(\xi)=-\frac{\epsilon_n}{2}\big(\xi\cdot p\cdot u+u\cdot p\cdot\xi\big)$$
where in the left hand side $u$ is considered as a linear map $p\rightarrow p^\perp$ and in the right hand side $\xi,p,u$ are viewed as elements of the Clifford algebra $Cl(\R^m)$; the dot "$\cdot$" stands for the Clifford product in $Cl(\R^m).$ 
\end{lemm}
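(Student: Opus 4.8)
The plan is to prove Lemma \ref{lemmeClifford} by a direct computation, reducing everything to an adapted orthonormal basis at the point $p_o$. Since both sides of the claimed identity are linear in $u$ and in $\xi$, it suffices to verify the formula on basis elements. I would complete the oriented basis $(e_1,\ldots,e_n)$ of $p$ to a positively oriented orthonormal basis $(e_1,\ldots,e_m)$ of $\R^m$, so that $p=e_1\wedge\cdots\wedge e_n$ as an element of $\Lambda^n\R^m\subset Cl(\R^m)$. The tangent space $T_p\mathcal{Q}_o\simeq L(p,p^\perp)$ is then spanned by the elementary maps $E_{a\alpha}$ sending $e_a\mapsto e_\alpha$ (for $1\le a\le n<\alpha\le m$) and killing the other $e_b$. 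The reduction I want is: it is enough to check the identity for $u=E_{a\alpha}$ and $\xi=e_b$, and then extend by bilinearity.

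For the left-hand side, $E_{a\alpha}(e_b)=\delta_{ab}\,e_\alpha$, which is immediate. The substance of the proof is the Clifford computation on the right. First I would identify, inside $Cl(\R^m)$, which element of $L(p,p^\perp)$ corresponds to $u=E_{a\alpha}$ under the identification used in the statement of Theorem \ref{th expr curv}; the natural guess is that the tangent vector $E_{a\alpha}$ is represented by the $2$-vector (or bivector) $e_a\wedge e_\alpha$ acting via the bracket $[\,\cdot\,,\cdot\,]$, so that I must be careful to use the \emph{same} identification $T_{p_o}\mathcal{Q}_o\simeq\Lambda^2$-elements that makes formula (\ref{curvature double bracket}) hold. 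Then I would expand the Clifford triple products $\xi\cdot p\cdot u$ and $u\cdot p\cdot\xi$ using the anticommutation relations $e_i\cdot e_j+e_j\cdot e_i=-2\langle e_i,e_j\rangle$ (or $+2$, according to the sign convention in \cite{F}), keeping track of how many transpositions are needed to move $e_a$ and $e_\alpha$ past the block $e_1\cdots e_n$. The factor $\epsilon_n=(-1)^{n(n+1)/2+1}$ should emerge precisely from commuting a single vector through the product $e_1\cdots e_n$ representing $p$, since $e_i\cdot(e_1\cdots e_n)=(-1)^{?}(e_1\cdots e_n)\cdot e_i$ with a sign governed by the parity of $n$.

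The main obstacle I anticipate is purely bookkeeping: getting the signs exactly right, both the Clifford-algebra sign convention (whether $e_i^2=+1$ or $-1$, and whether the inner product carries a minus) and the combinatorial sign from reordering vectors against the length-$n$ product $p$. The cleanest way to control this is to treat two cases separately, according to whether the index $a$ (the ``$p$-direction'' of $u$) equals the tested index $b$ or not, and to note that the symmetrized combination $\xi\cdot p\cdot u+u\cdot p\cdot\xi$ is designed exactly so that the ``wrong'' terms cancel and only the component along $p^\perp$ survives --- consistent with the fact that $u(\xi)\in p^\perp$ on the left. I would verify that when $b\ne a$ the two Clifford triple products cancel, yielding $0$, matching $\delta_{ab}e_\alpha=0$; and when $b=a$ the two products add up to give $-\tfrac{2}{\epsilon_n}e_\alpha$ after the sign count, so that the prefactor $-\epsilon_n/2$ returns exactly $e_\alpha$. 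Once these two cases are checked, bilinearity closes the proof.
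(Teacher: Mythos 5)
There is a genuine gap, and it sits exactly at the point you flag as a ``natural guess'': the identification of $u\in T_{p}\mathcal{Q}_o$ with an element of $Cl(\R^m)$. In this paper the Grassmannian is embedded as $\mathcal{Q}_o\subset\Lambda^n\R^m$ (Pl\"ucker embedding), so a tangent vector at $p=e_1\wedge\cdots\wedge e_n$ is the $n$-vector $u=\sum_{i=1}^n e_1\wedge\cdots\wedge u(e_i)\wedge\cdots\wedge e_n$; in your basis, $E_{a\alpha}$ corresponds to $e_1\wedge\cdots\wedge e_{a-1}\wedge e_\alpha\wedge e_{a+1}\wedge\cdots\wedge e_n$, \emph{not} to the bivector $e_a\wedge e_\alpha$. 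This is also the identification in force in Theorem \ref{th expr curv}, where $[u,v]$ is a bracket of two $n$-vectors. With the bivector representative the asserted formula does not even type-check: $\xi\cdot p\cdot u$ has Clifford degree of parity $n+3$, so the right-hand side is not a vector of $\R^m$ in general (already for $n=1$, $m=2$, $u=E_{12}$, one gets $-\tfrac{\epsilon_1}{2}(e_1\cdot e_1\cdot e_1e_2+e_1e_2\cdot e_1\cdot e_1)=e_1e_2$ instead of $e_2$). Since the entire computation hinges on which element of $Cl(\R^m)$ you multiply by, the proof cannot close as proposed; ``use whichever identification makes \eqref{curvature double bracket} hold'' is not a substitute for pinning it down.

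Once that is corrected, your plan is essentially the paper's: the paper writes $u=\sum_i e_1\cdot\ldots\cdot u(e_i)\cdot\ldots\cdot e_n$, slides $u(e_i)$ to the end of the product (it anticommutes with all the $e_j$), uses $(e_1\cdots e_n)^2=-\epsilon_n$ (so the convention is $v\cdot v=-|v|^2$, which you should fix rather than leave as ``$-2$ or $+2$''), and reduces to $\xi\cdot p\cdot u+u\cdot p\cdot\xi=\epsilon_n\sum_i\bigl(\xi\cdot e_i\cdot u(e_i)+u(e_i)\cdot e_i\cdot\xi\bigr)$; the symmetrization then kills the components of $\xi$ orthogonal to $e_i$, leaving $-2\epsilon_n\,u(\xi)$. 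That cancellation is exactly the $a\neq b$ versus $a=b$ dichotomy you anticipate, so the remainder of your outline is sound.
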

\begin{proof}[Proof of Lemma \ref{lemmeClifford}] We write
$$u=\sum_{i=1}^ne_1\wedge \ldots\wedge u(e_i)\wedge\ldots\wedge e_n=\sum_{i=1}^ne_1\cdot \ldots\cdot u(e_i)\cdot\ldots\cdot e_n,$$
and we compute
\begin{eqnarray*}
\xi\cdot p\cdot u&=&\sum_i\xi\cdot e_1\cdot\ldots\cdot e_n \cdot e_1\cdot \ldots\cdot u(e_i)\cdot\ldots\cdot e_n\\
&=&\sum_i\xi\cdot e_1\cdot\ldots\cdot e_n \cdot e_1\cdot \ldots\cdot e_i\cdot\ldots\cdot e_n\cdot(-e_i)\cdot u(e_i)\\
&=&\sum_i\epsilon_n\ \xi\cdot e_i\cdot u(e_i)
\end{eqnarray*}
since $(e_1\cdot\ldots\cdot e_n)^2=-\epsilon_n.$ Similarly, we compute
$$u\cdot p\cdot\xi=\sum_i\epsilon_n\ u(e_i)\cdot e_i\cdot\xi,$$
and thus get
\begin{eqnarray*}
\xi\cdot p\cdot u+u\cdot p\cdot\xi&=&\epsilon_n\sum_i\left(\xi\cdot e_i\cdot u(e_i)+u(e_i)\cdot e_i\cdot\xi\right)\\
&=&\epsilon_n\sum_i\left(\xi_ie_i\cdot e_i\cdot u(e_i)+u(e_i)\cdot e_i\cdot\xi_ie_i\right)\\
&=&-2 \epsilon_n u(\xi),
\end{eqnarray*}
which is the required formula.
\end{proof}
Using Lemma \ref{lemmeClifford}, we get
\begin{eqnarray*}
d(u(\xi))(v)&=&-\frac{\epsilon_n}{2}\Big(d\xi(v)\cdot p_o\cdot u+\xi\cdot v\cdot u+\xi\cdot p_o\cdot du(v)\\&&
\quad \quad +du(v)\cdot p_o\cdot\xi+u\cdot v\cdot\xi+u\cdot p_o\cdot d\xi(v)\Big).
\end{eqnarray*}
Moreover, by (\ref{property dxi}) and since $\nabla\xi=0$ at $p_o,$ we have 
$$d\xi(v)=v(\xi)=-\frac{\epsilon_n}{2}\big(\xi\cdot p_o\cdot v+v\cdot p_o\cdot\xi\big),$$
 and we thus obtain an expression of $d(u(\xi))(v)$ in terms of the Clifford products of $u,v,\xi,p_o$ and $du.$ Switching $u$ and $v$ we get a similar formula for  $d(u(\xi))(v).$ Plugging these two formulas in (\ref{formula R u v}) 
and using the first identity in (\ref{properties u v xi}) we may then easily get
$$R(u,v)\xi=\epsilon_n\big([\xi,[u,v]]\big)^T$$
(using moreover that $u\cdot p_o=-p_o\cdot u,$ $v\cdot p_o=-p_o\cdot v$ and $p_o\cdot p_o=-\epsilon_n$); this gives the result since $[\xi,[u,v]]$ is in fact a vector belonging to $p_o$: indeed, it is easy to check that $[u,v]$ belongs to $\Lambda^2p_o\oplus\Lambda^2p_o^\perp$ if $u$ and $v$ are tangent to $\mathcal{Q}_o$ at $p_o,$ and then that $[\xi,[u,v]]$ belongs to $p_o$ if $\xi$ belongs to $p_o.$
\\

Finally, we provide another useful formula for the curvature of the tautological bundle $\tau_N\rightarrow\mathcal{Q}_o$:
\begin{lemm}
Let $\omega_o\in\Omega^2(\mathcal{Q}_o,End(\tau_N))$ be the curvature of $\tau_N\rightarrow\mathcal{Q}_o.$ Then, for $u,v\in T_{p}\mathcal{Q}_o,$
$$\omega_o(u,v)=u\circ v^*-v\circ u^*,$$
where $u,v:p\rightarrow p^\perp$ are regarded as linear maps and $u^*,v^*:p^{\perp}\rightarrow p$ are their adjoint.
\end{lemm}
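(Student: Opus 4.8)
The plan is to compute the curvature directly from the definition of $\nabla^N$ as the orthogonal projection of the flat ambient derivative. Write $\pi_N:\R^m\to p_o^\perp$ for the field of orthogonal projections onto the fibres of $\tau_N$ and let $d$ denote the flat derivative of $\R^m$-valued maps; by (\ref{covariant derivatives}) the connection is $\nabla^N_X\xi=\pi_N(d_X\xi)$ for every section $\xi$ of $\tau_N$. First I would establish, for an arbitrary subbundle of the trivial bundle carrying the projected connection, the general formula
$$R^N(X,Y)s=\pi_N\big[(d_X\pi_N)(d_Y\pi_N)-(d_Y\pi_N)(d_X\pi_N)\big]s$$
valid for all $s\in\Gamma(\tau_N)$.

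To derive it, I expand $\nabla^N_X\nabla^N_Y s=\pi_N\,d_X(\pi_N d_Y s)=\pi_N(d_X\pi_N)(d_Y s)+\pi_N\,d_X d_Y s$, antisymmetrize in $X,Y$, and subtract $\nabla^N_{[X,Y]}s$; the pure second-derivative terms cancel because $d$ is flat. Two algebraic identities then finish the reduction: differentiating $\pi_N^2=\pi_N$ gives $\pi_N(d\pi_N)\pi_N=0$, so $\pi_N(d_X\pi_N)$ annihilates the $\tau_N$-component of its argument; and differentiating $\pi_N s=s$ gives $(I-\pi_N)(d_Y s)=(d_Y\pi_N)s$, so the surviving $\tau_N^\perp$-component of $d_Y s$ is exactly $(d_Y\pi_N)s$. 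Replacing $d_Y s$ by $(d_Y\pi_N)s$ yields the displayed formula.

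It then remains to evaluate the right-hand side in block form relative to the splitting $\R^m=p_o\oplus p_o^\perp$. Under the identification $T_{p_o}\mathcal{Q}_o\simeq L(p_o,p_o^\perp)$, a tangent vector $u$ is encoded in the derivative $d_u\pi_N$ of the projection field, and differentiating the projection along a curve of oriented $n$-planes (using an orthonormal moving frame) shows that the diagonal blocks vanish, again by $\pi_N(d\pi_N)\pi_N=0$ and its analogue, while the off-diagonal blocks are $u$ and its adjoint $u^*$; explicitly $d_u\pi_N=\left(\begin{smallmatrix}0&-u^*\\-u&0\end{smallmatrix}\right)$. A direct matrix multiplication then gives $\pi_N(d_u\pi_N)(d_v\pi_N)\big|_{p_o^\perp}=u\circ v^*$, and antisymmetrizing produces $\omega_o(u,v)=u\circ v^*-v\circ u^*$, as claimed.

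The main obstacle is pinning down this block form together with the accompanying signs; fortunately the final antisymmetric expression is insensitive to the overall sign in the identification $u\leftrightarrow d_u\pi_N$, since replacing $u,v$ by $-u,-v$ leaves $u\circ v^*-v\circ u^*$ unchanged, so only the structure (vanishing diagonal, adjoint off-diagonal blocks) must be verified carefully. Alternatively, one could deduce the lemma from Theorem \ref{th expr curv} by extracting the $\Lambda^2 p_o^\perp$-component of $\epsilon_n[u,v]$ and rewriting it as a skew operator on $p_o^\perp$, but the projection computation above is more self-contained and avoids the Clifford bookkeeping.
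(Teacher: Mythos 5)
Your proof is correct, but it follows a genuinely different route from the paper's. The paper does not compute the curvature from scratch here: it writes the adjoint $u^*:p^\perp\to p$ via the same Clifford formula as in Lemma \ref{lemmeClifford}, expands $(u\circ v^*-v\circ u^*)(\xi)$ as a sum of Clifford products, and simplifies (using $p^2=-\epsilon_n$ and $p\cdot u=-u\cdot p$) to $\epsilon_n[\xi,[u,v]]$, which is the curvature by Theorem \ref{th expr curv}; so the paper's argument is exactly the ``alternative'' you mention at the end, run in reverse. You instead use the standard formula $R^N(X,Y)=\pi_N\left[(d_X\pi_N)(d_Y\pi_N)-(d_Y\pi_N)(d_X\pi_N)\right]$ for the projected connection on a subbundle of a trivial bundle, together with the block description $d_u\pi_N=\bigl(\begin{smallmatrix}0&-u^*\\-u&0\end{smallmatrix}\bigr)$ relative to $\R^m=p_o\oplus p_o^\perp$. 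Both steps are correct (the identity $\pi_N(d\pi_N)\pi_N=0$ and $(I-\pi_N)(d_Ys)=(d_Y\pi_N)s$ do the reduction, and a moving orthonormal frame confirms the block form, which is consistent with the paper's identification $\eta\mapsto(v\mapsto -i_{p_o}(\eta\wedge v))$ of $T_{p_o}\mathcal{Q}_o$ with $L(p_o,p_o^\perp)$). What your approach buys is self-containedness: it proves the lemma without invoking Theorem \ref{th expr curv} and with no Clifford bookkeeping, and it applies verbatim to $\tau_T$. What the paper's approach buys is economy within its own framework: having already established the double-bracket formula for the curvature, it only needs to check an algebraic identity in $Cl(\R^m)$, and the computation simultaneously exhibits the equivalence of the two expressions for the curvature. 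The only point you should nail down if writing this up in full is the sign in the identification $u\leftrightarrow d_u\pi_N$, though, as you correctly observe, the antisymmetrized expression $u\circ v^*-v\circ u^*$ is insensitive to it.
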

\begin{proof} 
We first note that the adjoint map $u^*:p^{\perp}\rightarrow p$ is explicitly given in terms of the Clifford product by the formula 
$$u^*(\xi)=-\frac{\epsilon_n}{2}\left(\xi\cdot p\cdot u+u\cdot p\cdot \xi\right),  \, \, \,  \forall  \xi \in  p^{\perp};$$ This is the same formula than the formula for $u:p\rightarrow p^{\perp}$ given in Lemma \ref{lemmeClifford}. A straightforward computation then gives
\begin{eqnarray*}
(u\circ v^*-v\circ u^*)(\xi)&=&\frac{1}{4}\left\{(\xi\cdot p\cdot v+v\cdot p\cdot\xi)\cdot p\cdot u+ u\cdot p\cdot (\xi\cdot p\cdot v+v\cdot p\cdot\xi)\right.\\
&&\left.-(\xi\cdot p\cdot u+u\cdot p\cdot\xi)\cdot p\cdot v-v\cdot p\cdot(\xi\cdot p\cdot u+u\cdot p\cdot \xi)\right\}
\end{eqnarray*}
which simplifies to
\begin{equation*}
(u\circ v^*-v\circ u^*)(\xi)=\frac{1}{4}\left\{\xi\cdot p\cdot v\cdot p\cdot u+ u\cdot p\cdot v\cdot p\cdot\xi-\xi\cdot p\cdot u\cdot p\cdot v-v\cdot p\cdot u\cdot p\cdot \xi\right\}.
\end{equation*}
Now, we have $p^2=-\epsilon_n$ and
\begin{eqnarray*}    p\cdot u+u\cdot p=  0\\ p\cdot v+v\cdot p = 0 \end{eqnarray*}
$\forall u,v\in T_p\mathcal{Q}_o$, and we get
\begin{eqnarray*}
(u\circ v^*-v\circ u^*)(\xi)&=&\frac{\epsilon_n}{4}\left\{\xi\cdot(u\cdot v-v\cdot u)-(u\cdot v-v\cdot u)\cdot\xi\right\}\\
&=&\epsilon_n[\xi,[u,v]].
\end{eqnarray*}
This is the expression of the curvature of the bundle $\tau_N\rightarrow\mathcal{Q}_o$ given in Theorem~\ref{th expr curv}.
\end{proof}
We immediately deduce the following 
\begin{coro}\label{coro app normal curvature}
For a smooth map $\overline{\varphi}_o:\M\rightarrow\mathcal{Q}_o,$ we have
$$\overline{\varphi}_o^*\omega_o(X,Y)=d\overline{\varphi}_o(X)\circ d\overline{\varphi}_o(Y)^*-d\overline{\varphi}_o(Y)\circ d\overline{\varphi}_o(X)^*,$$
$\forall X,Y\in T\M,$ where $\overline{\varphi}_o^*\omega_o(X,Y)$ belonging to $\Lambda^2\overline{\varphi}_o^{\perp}$ is regarded as a map $\overline{\varphi}_o^{\perp}\rightarrow\overline{\varphi}_o^{\perp}.$
\end{coro}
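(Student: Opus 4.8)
The plan is to read the corollary as nothing more than the pull-back, along $\overline{\varphi}_o$, of the lemma immediately preceding it, which already expresses the curvature $\omega_o$ of $\tau_N\to\mathcal{Q}_o$ as the skew-symmetric operator $\omega_o(u,v)=u\circ v^*-v\circ u^*$, where $u,v\in T_{p_o}\mathcal{Q}_o$ are viewed as linear maps $p_o\to p_o^\perp$. The first step is to invoke the naturality of curvature under pull-back: since $\E=\overline{\varphi}_o^*\tau_N$ carries the induced connection $\nabla^N$, its curvature is the pull-back of $\omega_o$, so that for all $X,Y\in T_x\M$ one has $\overline{\varphi}_o^*\omega_o(X,Y)=\omega_o\big(d\overline{\varphi}_o(X),d\overline{\varphi}_o(Y)\big)$ as an endomorphism of the fibre $\overline{\varphi}_o(x)^\perp$.

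The second step is to observe that $d\overline{\varphi}_o(X)$ and $d\overline{\varphi}_o(Y)$ are tangent vectors to $\mathcal{Q}_o$ at $\overline{\varphi}_o(x)$, hence, under the standard identification $T_{p_o}\mathcal{Q}_o\simeq L(p_o,p_o^\perp)$ used throughout the paper, are exactly linear maps $\overline{\varphi}_o(x)\to\overline{\varphi}_o(x)^\perp$ to which the lemma applies. Substituting $u=d\overline{\varphi}_o(X)$ and $v=d\overline{\varphi}_o(Y)$ into the lemma then gives
$$\overline{\varphi}_o^*\omega_o(X,Y)=d\overline{\varphi}_o(X)\circ d\overline{\varphi}_o(Y)^*-d\overline{\varphi}_o(Y)\circ d\overline{\varphi}_o(X)^*,$$
which is the asserted identity, the adjoints being taken with respect to the Euclidean metric restricted to $\overline{\varphi}_o^\perp$ and to $\overline{\varphi}_o$.

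I do not expect any genuine obstacle here; the computation has already been carried out at the level of $\mathcal{Q}_o$ in the preceding lemma (itself a consequence of Theorem \ref{th expr curv}), and the pull-back is purely formal. The single point deserving a line of care is that the two readings of ``$\overline{\varphi}_o^*\omega_o(X,Y)\in\Lambda^2\overline{\varphi}_o^\perp$ regarded as a map $\overline{\varphi}_o^\perp\to\overline{\varphi}_o^\perp$'' agree, i.e. that turning the resulting bivector into a skew-symmetric endomorphism via the metric identification of $\Lambda^2\overline{\varphi}_o^\perp$ with skew-symmetric operators on $\overline{\varphi}_o^\perp$ reproduces $u\circ v^*-v\circ u^*$; but this is precisely the identification already fixed in the lemma, so nothing new must be verified and the corollary follows at once.
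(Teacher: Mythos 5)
Your proposal is correct and coincides with the paper's own argument: the corollary is stated immediately after the lemma computing $\omega_o(u,v)=u\circ v^*-v\circ u^*$ precisely because it follows by naturality of the curvature under pull-back together with the substitution $u=d\overline{\varphi}_o(X)$, $v=d\overline{\varphi}_o(Y)$. Nothing is missing.
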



\section{The abstract shape operator $B$ identifies to the differential of the Gauss map}\label{app B Gauss map}
The aim is to link the tensor $B$ to the differential of the Gauss map. The next lemma first shows that $B$ is the pull-back of a natural tensor on the tautological bundles on the Grassmannian $\mathcal{Q}_o:$
\begin{lemm} \label{Bprime}
Let us define
\begin{eqnarray*}
B':\hspace{1cm} \tau_N&\rightarrow& T^*\mathcal{Q}_o\otimes\tau_T\\
\xi&\mapsto&Y\mapsto -(d\xi(Y))^T,
\end{eqnarray*}
where, in the right hand side, $\xi$ is extended to a local section of $\tau_N\rightarrow\mathcal{Q}_o.$ We have:
\begin{itemize}
\item[(i)] $B'$ is a tensor; precisely,
$$B'(\xi)(Y)=Y^*(\xi),$$
for all $ \xi\in {\tau_N}_{p_o}\simeq p_o^{\perp}$ and $Y\in T_{p_o}\mathcal{Q}_o\simeq L(p_o,p_o^{\perp}),$ where $Y$ is considered as a linear map $p_o\rightarrow p_o^{\perp}$ and $Y^*: p_o^{\perp}\rightarrow p_o$ is its adjoint.
\item[(ii)] $B$ is the pull-back of $B'$ by the Gauss map:
$$B={\ove{\varphi}_o}^*B'.$$
\end{itemize}
\end{lemm}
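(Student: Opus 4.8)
The plan is to prove (i) by a direct frame computation and then deduce (ii) from the naturality of the pull-back. The first task for (i) is to make precise how a tangent vector $Y\in T_{p_o}\mathcal{Q}_o$ acts as a linear map $p_o\to p_o^{\perp}$: exactly as in the identification used in Appendix~\ref{app curv tautological bundles} (see (\ref{property dxi})), if $e$ is any local section of $\tau_T$ with $e(p_o)=w\in p_o$, then $Y(w)=(de(Y))^N$, the projection of $de(Y)$ onto $p_o^{\perp}$. Consequently, for $\xi\in p_o^{\perp}$ the adjoint $Y^*:p_o^{\perp}\to p_o$ is characterized by
$\<Y^*(\xi),w\>=\<\xi,Y(w)\>=\<\xi,de(Y)\>$,
the last equality holding because $\xi$ is orthogonal to $p_o$ and hence only the normal part of $de(Y)$ contributes.

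Next I would fix an orthonormal local frame $(e_1,\dots,e_n)$ of $\tau_T$, so that $e_i(p_o)\in p_o$ and $\<\xi,e_i\>\equiv 0$ for any local section $\xi$ of $\tau_N$. Differentiating this orthogonality relation in the direction $Y$ gives $\<d\xi(Y),e_i\>=-\<\xi,de_i(Y)\>$. Expanding the $T$-projection of $d\xi(Y)$ in this frame,
\begin{equation*}
(d\xi(Y))^T=\sum_{i=1}^n\<d\xi(Y),e_i\>\,e_i=-\sum_{i=1}^n\<\xi,de_i(Y)\>\,e_i,
\end{equation*}
whereas the previous paragraph yields $Y^*(\xi)=\sum_i\<Y^*(\xi),e_i\>\,e_i=\sum_i\<\xi,de_i(Y)\>\,e_i$. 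Therefore $B'(\xi)(Y)=-(d\xi(Y))^T=Y^*(\xi)$. Since the right-hand side depends only on the value $\xi(p_o)$ and not on the chosen extension, this simultaneously establishes the tensoriality and the explicit formula claimed in (i).

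For (ii) I would invoke the chain rule together with the tensoriality just proved. A pulled-back section of $\E=\overline{\varphi}_o^*\tau_N$ has the form $\eta\circ\overline{\varphi}_o$ for a local section $\eta$ of $\tau_N$, and since the $T$-projection in the definition of $B$ is taken onto $\ET|_x=\overline{\varphi}_o(x)$,
\begin{equation*}
B(\eta\circ\overline{\varphi}_o)(X)=-\big(d(\eta\circ\overline{\varphi}_o)(X)\big)^T=-\big(d\eta(d\overline{\varphi}_o(X))\big)^T=B'(\eta)\big(d\overline{\varphi}_o(X)\big).
\end{equation*}
Because both $B$ and $B'$ are tensorial in their first argument (by (i)), and every value $\xi(x)\in p_o^{\perp}$ is realized by some such pulled-back section, this identity at a point reads $B(\xi)(X)=B'(\xi)(d\overline{\varphi}_o(X))=(\overline{\varphi}_o^*B')(\xi)(X)$, which is precisely $B=\overline{\varphi}_o^*B'$.

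The main obstacle is getting the identification $T_{p_o}\mathcal{Q}_o\simeq L(p_o,p_o^{\perp})$ and the associated adjoint bookkeeping exactly right: one must recognize that differentiating the orthogonality $\<\xi,e_i\>=0$ is precisely what turns $(d\xi(Y))^T$ into $Y^*(\xi)$, and that the resulting frame-independence is the tensoriality asserted in (i). Once this is settled, part (ii) reduces to a routine chain-rule computation.
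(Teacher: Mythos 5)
Your proposal is correct, and part (ii) coincides with the paper's proof (the same chain-rule computation $d(\eta\circ\overline{\varphi}_o)(X)=d\eta(d\overline{\varphi}_o(X))$ followed by the tangential projection); your added remark that tensoriality lets you reduce to sections of the form $\eta\circ\overline{\varphi}_o$ is a small but welcome extra precision. For part (i) you take a genuinely different computational route. The paper represents $p_o^{\perp}$ by the multivector $*p_o\in\Lambda^k\R^m$, writes $(d\xi(Y))^T=i_{*p_o}(*p_o\wedge d\xi(Y))$, differentiates the incidence relation $*p_o\wedge\xi\equiv 0$ to get $*p_o\wedge d\xi(Y)=-(*Y)\wedge\xi$, and then identifies $i_{*p_o}((*Y)\wedge\xi)$ with $Y^*(\xi)$ by an explicit Hodge-star computation in an adapted orthonormal basis. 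You instead differentiate the scalar orthogonality relations $\langle\xi,e_i\rangle\equiv 0$ for an orthonormal frame $(e_1,\dots,e_n)$ of $\tau_T$, obtaining $(d\xi(Y))^T=-\sum_i\langle\xi,de_i(Y)\rangle e_i=-Y^*(\xi)$ directly. The underlying idea — differentiate the constraint that $\xi$ remains normal to the moving plane — is the same, but your version bypasses all the exterior-algebra and interior-product bookkeeping and is the more elementary of the two; the paper's version has the advantage of staying within the $\Lambda^n\R^m$ formalism used elsewhere (e.g.\ in Appendix \ref{app curv tautological bundles}). One point worth making explicit in your write-up: the identification $Y(w)=(de(Y))^N$ is independent of the chosen extension $e$ of $w$ (the difference of two extensions vanishing at $p_o$ has derivative tangent to the fibre, hence lying in $p_o$), which is what guarantees your characterization of $Y^*$ is well posed; this is consistent with the convention of the paper's equation (\ref{property dxi}).
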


\begin{proof}[Proof of (i)] Let $p_o\in\mathcal{Q}_o$ be an oriented $n$-plane, and $\xi\in p_o^\perp$ a vector normal to $p_o,$ extended to a local section of $\tau_N\rightarrow\mathcal{Q}_o.$ Let us consider $* p_o,$ the multi-vector belonging to $\Lambda^k\R^m$ which represents the linear space $p_o^\perp,$ with its natural orientation. We first observe that
$$(d\xi(Y))^T=i_{*p_o}\left(*p_o\wedge d\xi(Y)\right), \, \, \, \forall Y\in T_{p_o}\mathcal{Q}_o.$$ Since $*p_o\wedge\xi\equiv 0$ on $\mathcal{Q}_o,$ we have
$$*p_o\wedge d\xi(Y)=-(*Y)\wedge\xi,$$
and thus
$$-(d\xi(Y))^T=i_{*p_o}\left((*Y)\wedge \xi\right).$$
We finally observe that this expression is $Y^*(\xi),$ where $Y^*$ is the adjoint of the map represented by $Y$: let $Y_{ij}$ be the scalar such that 
$$Y(u_j)=\sum_{i=1}^kY_{ij}\, u_{n+i},\, \, \, \, \, \, \forall j, \, 1 \leq j \leq  n,$$
where $(u_1,\ldots,u_n)$ and $(u_{n+1},\ldots,u_{n+k})$ are  positively oriented, orthonormal bases of $p_o$ and $p_o^{\perp}$ respectively. We have
\begin{eqnarray*}
Y&=&\sum_{j=1}^n u_1\wedge\ldots\wedge u_{j-1}\wedge Y(u_j)\wedge u_{j+1}\wedge\ldots\wedge u_n\\
&=&\sum_{i=1}^k\sum_{j=1}^nY_{ij}\,  u_1\wedge\ldots\wedge u_{j-1}\wedge u_{n+i}\wedge u_{j+1}\wedge\ldots\wedge u_n
\end{eqnarray*}
and
\begin{eqnarray*}
*Y&=&\sum_{i=1}^k\sum_{j=1}^n Y_{ij}\ *\left(u_1\wedge\ldots\wedge u_{j-1}\wedge u_{n+i}\wedge u_{j+1}\wedge\ldots\wedge u_n\right)\\
&=&-\sum_{i=1}^k\sum_{j=1}^n Y_{ij}\ u_{n+1}\wedge\ldots\wedge u_{n+i-1}\wedge u_{j}\wedge u_{n+i+1}\wedge\ldots\wedge u_{n+k}.
\end{eqnarray*}
Since $*p_o=u_{n+1}\wedge\ldots\wedge u_{n+k},$ we get
$$i_{*p_o}\left((*Y)\wedge u_{n+i}\right)=\sum_{j=1}^n Y_{ij}\, u_j, \, \, \, \, \, \,  \forall j, \, 1 \leq j \leq  k,$$
and the claim follows.
\\
\\ \noindent \textit{Proof of (ii).} Assume that $\xi\in\Gamma(\tau_N);$ then $\xi\circ\ove{\varphi}_o\in\Gamma(\E),$ and
\begin{eqnarray*}
B(\xi\circ\overline{\varphi}_o)(X)&=&(d(\xi\circ\ove{\varphi}_o)(X))^T \\ 
&=& \big(d\xi(d\ove{\varphi}_o(X)) \big)^T\\
&=&B'(\xi)(d\ove{\varphi}_o(X)) \\
&=&\ove{\varphi}_o^*B'(\xi\circ\ove{\varphi}_o)(X), \quad \quad \forall X\in\Gamma(T\M).
\end{eqnarray*}
\end{proof}
Using Lemma \ref{Bprime}, we deduce that if $\xi$ belongs to the fibre of $ {\overline{\varphi}_o}^*\tau_N$ at the point $x_o\in\M,$ then
$$B(\xi)(X)=d {\overline{\varphi}_o}_{x_o}(X)^*(\xi), \quad \forall  X\in T_{x_o}\M,$$ 
where  $d {\ove{\varphi}_o}_{x_o}(X)^*$ is regarded as a map $\ove{\varphi}_o(x_o)^{\perp}\rightarrow \ove{\varphi}_o(x_o)$; this naturally identifies $B$ with $d{\ove{\varphi}}_o.$

\end{document}